\theoremstyle{plain}
\newtheorem{definition}{Definition}[section]
\newtheorem{theorem}[definition]{Theorem}
\newtheorem{lemma}[definition]{Lemma}
\newtheorem{corollary}[definition]{Corollary}
\newtheorem{proposition}[definition]{Proposition}
\newtheorem{remark}[definition]{Remark}
\theoremstyle{definition}
\newtheorem{example}[definition]{Example}
\numberwithin{equation}{section}
\newcommand{\A}{\mathcal A}
\newcommand{\B}{\mathcal B}
\newcommand{\C}{\mathcal C}
\newcommand{\W}{\mathcal W}
\newcommand{\mm}{\mathfrak m}
\newcommand{\Hom}{{\rm Hom}}
\newcommand{\Ext}{{\rm Ext}}
\newcommand{\Tor}{{\rm Tor}}
\newcommand{\Rmod}{R\text{-}{\rm Mod}}
\newcommand{\modR}{{\rm Mod}\text{-}R}
\newcommand{\Inj}{{\rm Inj}}
\newcommand{\Proj}{{\rm Proj}}
\newcommand{\Flat}{{\rm Flat}}
\newcommand{\ses}[3]{0 \rightarrow {#1} \rightarrow {#2} \rightarrow {#3} \rightarrow 0 } 
\newcommand{\kker}[1]{\ensuremath{{{\rm Ker}\left(#1\right)}}}
\newcommand{\FP}[1]{\mathcal{FP}_{#1}}
\newcommand{\FPinj}[1]{\mathcal{FP}_{#1}\text{-}{\rm Inj}}
\newcommand{\FPflat}[1]{\mathcal{FP}_{#1}\text{-}{\rm Flat}}
\newcommand{\Coh}[1]{{#1}\text{-$\mathit{Coh}$}}
\begin{document}

\title{Finiteness conditions and cotorsion pairs}

\author{Daniel Bravo}
 \address{
 Instituto de Ciencias F\'isicas y Matem\'aticas, 
 Universidad Austral de Chile \\
 Valdivia, Chile}
 \email{daniel.bravo@uach.cl}
 \thanks{The first author was partially supported by CONICYT/FONDECYT/Iniciaci\'on/11130324.}

 \author{Marco A. P\'erez}
 \address{Instituto de Matem\'aticas \\ Universidad Nacional Aut\'onoma de M\'exico \\ 
 Distrito Federal, M\'exico.}
 \email{maperez@im.unam.mx}
 \thanks{Part of this work was carried out during two  visits of the second author to the Universidad Austral de Chile during January and September 2015.}

\begin{abstract} \noindent We study the interplay between the notions of $n$-coherent rings and finitely $n$-presented modules, and also study the relative homological algebra associated to them. We show that the $n$-coherency of a ring is equivalent to the thickness of the class of finitely $n$-presented modules. The relative homological algebra part comes from the study of orthogonal complements to this class of modules with respect to $\Ext^1_R(F,-)$ and $\Tor_1^R(F,-)$. We also construct  cotorsion pairs from these  orthogonal complements, allowing us to provide further characterizations of $n$-coherent rings.
\end{abstract}

\maketitle

\tableofcontents

\section*{Introduction}\label{introduction}

Finitely generated and finitely presented modules (over a ring $R$) are ubiquitous in homological algebra. Many properties of modules can be described in terms of these two classes of modules, which in turn have many properties that can be stated in functorial terms. For example, the injectivity of module can be tested with respect to only finitely generated modules, and a module $M$ is finitely presented if the functor $\Hom_R(M, -)$ commutes with direct limits.

While finitely presented modules are finitely generated, the converse is not true in general. However, if $R$ is Noetherian, then these two classes coincide.  A first example of a (non Noetherian) ring where these two classes of modules do not coincide is $k[x_1,x_2,x_3\ldots]$, the ring of polynomials (over a field $k$) in countably infinite many variables. This is a well known coherent ring, i.e. a ring over which we can find finitely generated modules that are not finitely presented. From this perspective, {it seems natural to investigate certain collections of rings in order to refine the notion} {of finitely presented modules}. Recall that a finitely presented module is a finitely generated one such that it has a finite amount of relations between its finite collection of generators. Thus one can refine the class of finitely presented modules as of modules that not only have these {finiteness conditions} (on generators and on relations between generators), but also have a finite amount of relations among the relations between the generators, and then a finite amount of relations among those relations of relations, and so on {continuing up to} the $n$-th finite collection of relations among relations. This description {gives rise to notion of} \textit{finitely $n$-presented modules}. The study of the class of finitely $n$-presented modules, which we denote by $\FP{n}$, is the content Section~\ref{n-presented}, where some classic results are collected and examples exhibited.

These finiteness conditions over modules also motivate what can be thought as \textit{finiteness conditions over rings}. The notions of Noetherian and coherent rings can be generalized and stated from the point of view of these finitely $n$-presented modules, from where we get the collection of \textit{$n$-coherent rings}. It is immediate then to ask about the connection between these two concepts: finitely $n$-presented modules and $n$-coherent rings.  Section~\ref{sec:n-coherent} deals with this question and, in particular, a characterization of $n$-coherent rings in terms of finitely $n$-presented modules is established.

We also study the relative homological algebra, with respect to the class $\FP{n}$, from the injective and flat perspective; that is, modules that have \textit{a vanishing property} with respect to $\FP{n}$, and the functors $\Ext^1_R(-,-)$ and $\Tor^R_1(-,-)$. These classes of modules are called \textit{$\text{FP}_n$-injective modules} and \textit{${\rm FP}_n$-flat modules}, respectively, and denoted $\FPinj{n}$ and $\FPflat{n}$. Some of the presented results are adaptations of \cite{Ding-Chen-n-coherent} and \cite{BGH}. However, in the former reference, the authors consider slightly different notions of relative injective modules and relative flat modules, while the latter reference can be regarded as the $n = \infty$ case.  This is done in Section~\ref{relative-hom-alg}, where most of the results are presented for the case $n > 1$, since the case $n=0$ and $n=1$ are well documented in the literature (see \cite{Gobel}, \cite{Fieldhouse}, \cite{Stenstrom}, \cite{MaoDing2007Com} and \cite{MaoDing2005}, for instance). 

Section~\ref{Cotorsion pairs associated to FPninj and FPnflat} studies the completeness of certain cotorsion pairs related to the classes $\FPinj{n}$ and $\FPflat{n}$. {In the first half we study conditions} under which $\FPinj{n}$ is the left and right half of two complete cotorsion pairs. The second half is about cotorsion pairs {involving the class $\FPflat{n}$}. In the last section, we investigate conditions for when $R$ is an $n$-coherent ring in terms of the cotorsion pairs introduced in Section~\ref{Cotorsion pairs associated to FPninj and FPnflat}, and in particular, in terms of the classes $\FPinj{n}$ and $\FPflat{n}$.

Throughout this paper, {$R$ denotes an associative ring with unit}, and $\Rmod$ and {$\modR$} the categories of left and {right} $R$-modules, respectively. Unless otherwise stated, all modules will be left $R$-modules.


\section{Finiteness conditions in modules}\label{n-presented}

Let $n \geq 0$ be an integer. An $R$-module $M$ is said to be \textbf{finitely $n$-presented}, if  there is an exact sequence 
\[ 
F_n \to F_{n-1} \rightarrow \cdots \rightarrow F_1 \rightarrow F_0 \rightarrow M \rightarrow 0,
\]
where the modules $F_i$ are finitely generated and free, for every $0 \leq i \leq n$. Such exact sequence is called a \textbf{finite $n$-presentation} of $M$, and note that it is a truncated free resolution of $M$.

This way, whenever we are given a finitely $n$-presented module, then we may think of it as a finitely generated module, such that it has is a finite collection of relations between its generators, which in turn will have a finite amount of  relations among those relations, and so on all the way up to $n$.  The idea of finitely $n$-presented can be found in the literature, in particular in \cite{Bourbaki-Commutative-Algebra}, and in \cite{Brown-Cohomology-of-groups} where they are referred to as \textit{modules of type $\text{FP}_n$}. 

Denote by \textbf{$\FP{n}$} the class of all finitely $n$-presented modules. Thus $\FP{0}$ is the class of all finitely generated modules, and $\FP{1}$ is the class of all finitely presented modules. 

Hence, whenever  a finite $n$-presentation of a module is exhibited, then we know that such module is in $\FP{n}$; in turn, if we have a finite $k$-presentation of a module in $\FP{n}$, with $k\leq n$, then we can extend that particular finite $k$-presentation to a finite $n$-presentation. This can be obtained from (a generalized) Schanuel's Lemma which we record next.

\begin{lemma}[Schanuel's Lemma] \label{Schanuel}
Let 
\[
0 \to K \to F_n \to F_{n-1} \to \cdots \to F_1 \to F_0 \to M \to 0
\]
and
\[
0 \to L \to G_n \to G_{n-1} \to \cdots \to G_1 \to G_0 \to M \to 0
\]
be exact sequences in $\Rmod$, with $F_i$ and $G_i$ projective $R$-modules. Then
\[
F_0 \oplus G_1 \oplus F_{2} \oplus G_3 \oplus \cdots \cong G_0 \oplus F_1 \oplus G_{1} \oplus F_2 \oplus \cdots.
\]
Consequently, $F_i$ is finitely generated if and only if $G_i$ is, for all $0\leq i \leq n$.
\end{lemma}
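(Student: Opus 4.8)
The plan is to prove the isomorphism by induction on $n$, reducing at each step to the classical (short) Schanuel Lemma applied to a suitable syzygy. First I would set up notation: write $K_0 = \ker(F_0 \to M)$ and $L_0 = \ker(G_0 \to M)$. The base case $n = 0$ is the ordinary Schanuel Lemma: from the two short exact sequences $0 \to K_0 \to F_0 \to M \to 0$ and $0 \to L_0 \to G_0 \to M \to 0$ with $F_0$, $G_0$ projective, one forms the pullback of $F_0 \to M \leftarrow G_0$ and reads off $F_0 \oplus L_0 \cong G_0 \oplus K_0$. (I would include the short pullback argument, since the whole induction rests on it.)

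For the inductive step, suppose the result holds for chains of length $n-1$. Given the two length-$n$ exact sequences, truncate them: $0 \to K \to F_n \to F_{n-1} \to \cdots \to F_1 \to K_0 \to 0$ is a length-$(n-1)$ projective resolution-type sequence ending in $K_0$, and similarly $0 \to L \to G_n \to \cdots \to G_1 \to L_0 \to 0$ ends in $L_0$. These do not present the same module, so I cannot apply the inductive hypothesis directly; the standard fix is to add trivial summands. Using $F_0 \oplus L_0 \cong G_0 \oplus K_0$ from the base case, I would form the two exact sequences
\[
0 \to K \to F_n \to \cdots \to F_1 \oplus G_0 \to K_0 \oplus G_0 \to 0
\]
and
\[
0 \to L \to G_n \to \cdots \to G_1 \oplus F_0 \to L_0 \oplus F_0 \to 0,
\]
which now both end in the \emph{isomorphic} modules $K_0 \oplus G_0 \cong L_0 \oplus F_0$. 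Applying the inductive hypothesis to these two length-$(n-1)$ sequences yields an isomorphism between the alternating sums of their terms; after cancelling the common projective summands $F_0$, $G_0$ that were artificially introduced (and noting they land in the alternating sum with signs that make them cancel against each other, or absorbing them into the statement), one obtains exactly the claimed isomorphism
\[
F_0 \oplus G_1 \oplus F_2 \oplus G_3 \oplus \cdots \cong G_0 \oplus F_1 \oplus G_2 \oplus F_3 \oplus \cdots .
\]
The ``Consequently'' clause then follows: an $R$-module that is a direct summand of a finitely generated module is finitely generated, so if all $G_i$ are finitely generated then each $F_i$, being (up to the isomorphism) a summand of a finite direct sum of $G_j$'s, is finitely generated; the converse is symmetric. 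One also invokes that a finitely generated projective (indeed free) module is, well, finitely generated, to run the bookkeeping in both directions.

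The main obstacle is the bookkeeping of which summand lands on which side of the isomorphism and with what parity, so that the artificially added copies of $F_0$ and $G_0$ genuinely cancel and the indices shift correctly when passing from length $n-1$ to length $n$; getting the alternating pattern $F_0, G_1, F_2, G_3, \ldots$ versus $G_0, F_1, G_2, F_3, \ldots$ to come out right requires care with the parity of $n$. A cleaner alternative, which I would mention, is to avoid the additive fudge entirely: use the generalized (long) Schanuel Lemma in the form that says any two projective resolutions of $M$ have isomorphic $n$-th syzygies up to projective summands, i.e. $K \oplus (\text{projectives}) \cong L \oplus (\text{projectives})$ with the projectives being precisely the alternating sums above; this is a standard consequence of iterating the short Schanuel Lemma, and several textbook references (e.g. the treatment in Brown or in Rotman) can be cited. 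Either way, once the isomorphism of alternating direct sums is in hand, the finite-generation equivalence is immediate from closure of finitely generated modules under direct summands.
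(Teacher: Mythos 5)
The paper offers no proof of this lemma (it is cited as standard), so there is nothing to compare against; I will instead assess the proposal on its own. Your plan for the isomorphism --- induction on $n$, bolting on trivial projective summands $G_0$ and $F_0$ so that the truncated sequences end in isomorphic modules, and invoking the ordinary Schanuel Lemma as the base case --- is the classical textbook argument and is sound. A small point: nothing actually ``cancels'' at the end of the inductive step; the added $F_0$ and $G_0$ simply slot into the correct positions of the displayed isomorphism. Also note the inductive argument genuinely produces an isomorphism that carries $K$ and $L$ at the ends, namely
\[
F_0 \oplus G_1 \oplus F_2 \oplus \cdots \cong G_0 \oplus F_1 \oplus G_2 \oplus \cdots
\]
with whichever of $K$ or $L$ tacked on to each side according to the parity of $n$; the version without $K,L$ (as printed) is false already for $n=0$ (compare $0 \to 0 \to R \to R \to 0$ with $0 \to R^{(\omega)} \to R^{(\omega)}\oplus R \to R \to 0$).

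The genuine gap is in the ``Consequently'' clause. You argue that ``if all $G_i$ are finitely generated then each $F_i$, being a summand of a finite direct sum of $G_j$'s, is finitely generated.'' That is not what the isomorphism gives you: an even-index $F_i$ is a summand of $G_0 \oplus F_1 \oplus G_2 \oplus F_3 \oplus \cdots$, which also contains the \emph{odd}-index $F_j$'s, while an odd-index $F_i$ is a summand of $F_0 \oplus G_1 \oplus F_2 \oplus \cdots$, which contains the \emph{even}-index $F_j$'s. So to conclude any one $F_i$ is finitely generated you already need to know the others are --- the reasoning is circular. (In fact, the bald claim ``$F_i$ finitely generated iff $G_i$ finitely generated'' is not a consequence of the isomorphism and is false in general, as the $n=0$ example above shows.) What the isomorphism with $K,L$ included \emph{does} give, and what the paper actually uses, is: if all $F_i$ and all $G_i$ for $0 \le i \le n$ are finitely generated, then $K$ is finitely generated if and only if $L$ is. Alternatively, one can obtain a level-by-level transfer by applying the isomorphism to each truncation at level $k \le n$ and inducting on $k$; either way, the single top-level isomorphism by itself does not suffice for the claim as you have phrased it.
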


Whenever a finite $n$-presentation of $M$ can be extended to a projective resolution of $M$ by finitely generated free modules, we say that $M$ is  \textbf{finitely $\infty$-presented}, and similarly denote by \textbf{$\FP{\infty}$} the class of all such modules. Modules in $\FP{\infty}$ have appeared in the literature before, in particular in \cite{bieri}, where they are also called \textit{of type $\text{FP}_{\infty}$}. The class $\FP{\infty}$ is not empty since, at least, we always have any finitely generated free module in it.

We immediately observe that every finitely $(n+1)$-presented module is finitely $n$-presented, and thus obtain the following descending chain of inclusions:
\begin{equation}\label{chain-of-FPn}
\FP{0} \supseteq \FP{1} \supseteq \cdots \supseteq \FP{n} \supseteq \FP{n+1} \supseteq \cdots \supseteq \FP{\infty},
\end{equation}
from where it immediately follows that $\bigcap_{n \geq 0} \FP{n} \supseteq \FP{\infty}$. Indeed, this last inclusion is an equality, since any finite $n$-presentation of a module $M$ in the intersection can be extended to a finite $(n+1)$-presentation, which in turn can again be extended, and so on, thus obtaining a finitely generated free resolution of $M$. 

Another application of Schanuel's Lemma is that it allows us to show when a module $M$ is in $\FP{n}$, but not in $\FP{n+1}$. All we have to do is to exhibit a finite $n$-presentation of $M$ such that its $(n+1)$-syzygy is not finitely generated. This is what we use to illustrate how the chain of inclusions \eqref{chain-of-FPn} behaves for certain rings.

\begin{example} \label{2-coherent-ring-example}
Let $k$ be a field and $R$ be the following polynomial ring:
\[
R := k[x_1,x_2,x_3,\ldots] / ( x_i x_j)_{i,j \geq 1}.
\]
We will show that the chain of inclusions in \eqref{chain-of-FPn} is strict up to $2$.

First note that, the ideal $(x_1)$ is in $\FP{0}$, but not in $\FP{1}$. This since it fits in the following short exact sequence
\[
0 \to \mm \to R \xrightarrow{f} (x_1) \to 0,
\]
where the map $R \xrightarrow{f} (x_1)$, given by the multiplication by $x_1$, is an epimorphism with infinitely generated kernel, namely $\mm = (x_1,x_2,x_3,\ldots)$. 

Next consider the quotient $R / (x_1)$, which as an $R$-module, fits in the following exact sequence:
\[
0 \to \mm \to R \xrightarrow{f_1} R \to R / (x_1) \to 0.
\]
Here the map $f_1$ is the same as above, and the map $R \to R / (x_1)$ is given by the canonical projection. This sequence shows that $R / (x_1) \in \FP{1} \setminus \FP{2}$. 

Finally, from \cite[Proposition 2.5]{BGH} (which refers precisely to this same ring), we know that if $M \in \FP{2}$, then $M$ is finitely generated and free. However, any finitely generated and free module is always in $\FP{\infty}$, hence $\FP{2} = \FP{\infty}$. This shows that \eqref{chain-of-FPn} collapses at $2$, as follows:
\[
\FP{0} \supset \FP{1} \supset \FP{2}  = \FP{n} = \FP{\infty}.
\]
\end{example}
The next example shows that \eqref{chain-of-FPn} may never collapse. 

\begin{example}  \label{strictly-infty-ring}
Let $k$ be a field and consider the polynomial ring $R$,
\[
R := {k[\ldots,x_{3},x_2,x_1,y_1,y_2,y_3,\ldots]} / {(x_{j+1} x_{j},  x_1 y_1, y_1 y_i)_{i,j \geq 1}}
\]
Then $(y_1) \in \FP{0} \setminus \FP{1}$, since the short exact  sequence, 
\[
0 \to \mm' \to R \xrightarrow{g_1} (y_1) \to 0,
\]
shows that the infinitely generated module $\mm'=(x_1,y_1,y_2,y_3,\ldots)$ is the kernel of the epimorphism $R \xrightarrow{g_1} (y_1)$, given by the multiplication by $y_1$. 

Next observe that $(x_1) \in \FP{1} \setminus \FP{2}$, since we have the following exact sequence:
\[
0 \to (x_1,x_3) \oplus \mm' \to R \oplus R \xrightarrow{f_2} R \xrightarrow{f_1} (x_1) \to 0.
\]
The map  $R \xrightarrow{f_1} (x_1)$ is an epimorphism, with a kernel generated by $x_2$ and $y_1$. This, in turn, gives a map $f_2$ from $R \oplus R$ onto these two generators, say $e_1=(1,0) \mapsto x_2$ and $e_2 = (0,1) \mapsto y_1$, providing the infinitely generated kernel $(x_1,x_3)\oplus \mm'$.

Similarly $(x_2) \in \FP{2} \setminus \FP{3}$, as we can see from the exact sequence:
\[
 \bigoplus_{1 \leq i \leq 4} R \xrightarrow{h_4} R \oplus R \xrightarrow{h_2}  R \xrightarrow{f_2} (x_2) \to 0,
\]
where $f_2$ is multiplication by $x_2$, and so $\kker{f_2} = (x_1,x_3)$. Thus, we define $h_2(e_1)=x_1$ and $h_2(e_2)=x_3$, showing that $\kker{h_2} = (y_1,x_2) \oplus (x_2,x_4)$. Now $h_4$ maps the first two generators, $e_1$ and $e_2$ to $(y_1,0)$ and $(x_2,0)$ (the next two generators, $e_3$ and $e_4$, are mapped to $(0,x_2)$ and $(0,x_4)$ respectively). So $y_i =(y_i,0,0,0) \in \kker{h_4}$, for all $i \geq 1$, making it an infinitely generated $R$-module, which is what we need.

Iterating this procedure, we observe that $(x_i) \in \FP{i} \setminus \FP{i+1}$ for $i \geq 1$. Hence in this case, the chain in \eqref{chain-of-FPn} is strict at every level.
\end{example}

The classes $\FP{n}$  and $\FP{\infty}$ has been studied before by several authors such as Bieri \cite{bieri}, Brown \cite{Brown-Cohomology-of-groups}, Bourbaki \cite{Bourbaki-Commutative-Algebra} and Glaz \cite{glaz}. We collect some results available in these references. 

Following the notation introduced Bourbaki \cite[pp. 41-42]{Bourbaki-Commutative-Algebra}, if $M$ is a finitely generated module, then we set
\[
\lambda_R(M) :=  \sup\{ n  {\geq 0} \text{ $|$ } \text{there is a finite $n$-presentation of $M$} \}.
\]
If $M$ is not finitely generated, then we set $\lambda_R(M) := -1$. When there is no ambiguity with respect to the ring we are working with, we simply use $\lambda(M)$ instead of $\lambda_R(M)$. This notation relates to the context of $\FP{n}$ as follows.

\begin{remark} \label{remark-equiv-lambda} 
For every $n \geq 0$, we have
\begin{enumerate} 
\item $M \in \FP{n}$ if and only if  $\lambda(M) \geq n$. 
\item $M \in \FP{n} \setminus \FP{n+1} $ if and only if $\lambda(M) = n$.
\item $M \in \FP{\infty}$ if and only if $\lambda(M) = \infty$.
\end{enumerate}
\end{remark}

The following result could be stated in terms of $\FP{n}$, but we keep it terms of $\lambda$ due to its simplicity.

\begin{theorem}[{\cite[Theorem 2.1.2]{glaz}}] \label{Glaz-theorem-on-FPn} 
Let $0 \rightarrow A \rightarrow B \rightarrow C \rightarrow 0$ be a short exact sequence of left $R$-modules. Then:
\begin{enumerate}

\item $\lambda(C) \geq \min\{ \lambda(B), \lambda(A) + 1 \}$. \label{Glaz-for-C}

\item $\lambda(B) \geq \min \{ \lambda(A), \lambda(C) \}$. \label{Glaz-for-B}

\item $\lambda(A) \geq \min\{ \lambda(B), \lambda(C) - 1 \}$. \label{Glaz-for-A}

\item If $B = A \oplus C$, then $\lambda(B) = \min\{ \lambda(A), \lambda(C) \}$.  \label{Glaz-for-sums}
\end{enumerate} 
\end{theorem}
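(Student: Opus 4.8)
The plan is to prove all four statements by a direct dimension-shifting argument using truncated free resolutions, where $\lambda$ is best interpreted via Remark~\ref{remark-equiv-lambda}: $\lambda(X) \geq n$ exactly when $X$ admits a finite $n$-presentation. The key reduction is the following observation: if $0 \to A \to B \to C \to 0$ is short exact and $A,C$ admit finite $n$-presentations $P_\bullet \to A$ and $Q_\bullet \to Q_0 \to C$ (with all terms finitely generated free), then the Horseshoe Lemma produces a finite $n$-presentation $P_\bullet \oplus Q_\bullet \to B$ of $B$ whose $i$-th syzygy sits in a short exact sequence with the $i$-th syzygies of $A$ and $C$. I would set this up once at the start, then read off each inequality.

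For \eqref{Glaz-for-B}, let $m = \min\{\lambda(A),\lambda(C)\}$; if $m = -1$ there is nothing to prove (or one notes $B$ could still be finitely generated, but the inequality $\lambda(B) \geq -1$ is trivial), and if $m \geq 0$ the Horseshoe construction above gives a finite $m$-presentation of $B$, so $\lambda(B) \geq m$. For \eqref{Glaz-for-C}, set $m = \min\{\lambda(B),\lambda(A)+1\}$; assuming $m \geq 0$, pick a finite $m$-presentation of $B$ and a finite $(m-1)$-presentation of $A$, splice the $(m-1)$-syzygy of $A$ against the resolution of $B$ using the snake/syzygy exact sequence coming from $\ses{A}{B}{C}$, and check the resulting complex over $C$ has all terms finitely generated free through degree $m$; one must be slightly careful at the bottom (degrees $0$ and $1$) since $C = B/A$ inherits $B_0 \to C$ directly. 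Statement \eqref{Glaz-for-A} is dual to \eqref{Glaz-for-C}: from a finite $\min\{\lambda(B),\lambda(C)-1\}$-presentation one builds the required presentation of $A$ as a kernel, again shifting degrees by one. Finally \eqref{Glaz-for-sums} follows by combining \eqref{Glaz-for-B} with the two inequalities $\lambda(A) \geq \lambda(A\oplus C) = \lambda(B)$ and $\lambda(C) \geq \lambda(B)$, which hold because a finite $n$-presentation of $B = A \oplus C$ restricts (using the projections and Schanuel's Lemma, Lemma~\ref{Schanuel}, to control finite generation of syzygies) to finite $n$-presentations of each summand; alternatively invoke \eqref{Glaz-for-A} and \eqref{Glaz-for-C} with the split sequence together with \eqref{Glaz-for-B}.

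The main obstacle I anticipate is not conceptual but bookkeeping: keeping the indexing of syzygies consistent across the three modules in the short exact sequence, and handling the low-degree terms where the truncated resolutions of $A$, $B$, $C$ interact asymmetrically (the resolution of $C$ is one step "shorter" at the top than that of $B$ when we only assume a presentation of $A$ one notch weaker). I would organize the whole argument around a single lemma: given short exact sequences of complexes built from the Horseshoe Lemma, the syzygy modules fit into short exact sequences $\ses{\Omega^i A}{\Omega^i B}{\Omega^i C}$, and then each of (1)--(4) becomes a one-line deduction about when $\Omega^i$ of the relevant module is finitely generated free — using that a finitely generated module with a finitely generated syzygy (over a free cover) is itself finitely presented, and iterating. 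Since this is \cite[Theorem 2.1.2]{glaz}, I would likely just cite Glaz for the full details and include only this sketch, but the argument above is self-contained modulo the Horseshoe Lemma and Schanuel's Lemma (Lemma~\ref{Schanuel}), both standard.
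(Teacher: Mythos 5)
The paper gives no proof of this statement; it is quoted directly from Glaz, so I compare your sketch with the standard argument rather than with anything in the source.

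Your overall plan is reasonable, but the organizing device you announce at the end---a single lemma packaging everything into Horseshoe syzygy sequences $\ses{\Omega^i A}{\Omega^i B}{\Omega^i C}$---only directly handles part \eqref{Glaz-for-B}. The Horseshoe Lemma builds a resolution of the \emph{middle} term from chosen resolutions of the two outer terms; it cannot be run in reverse, so it does not by itself yield \eqref{Glaz-for-C} or \eqref{Glaz-for-A}, where the hypotheses concern $B$ and one outer term and the conclusion is about the other. The sequences one actually needs there have a different shape. For \eqref{Glaz-for-C}: choose a finitely generated free cover $F_0 \twoheadrightarrow B$, compose with $B \twoheadrightarrow C$, and compare kernels to get $\ses{\Omega B}{K}{A}$ with $K := \ker(F_0 \to C)$; here $A$ sits as the \emph{quotient}, not the subobject, so this is not a Horseshoe sequence. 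Applying your already-established part \eqref{Glaz-for-B} to it gives $\lambda(K) \geq \min\{\lambda(B)-1,\lambda(A)\}$, and then $\ses{K}{F_0}{C}$ yields $\lambda(C) \geq 1 + \lambda(K) \geq \min\{\lambda(B),\lambda(A)+1\}$. For \eqref{Glaz-for-A} the analogous move is a pullback: with $Q_0 \twoheadrightarrow C$ finitely generated free and kernel $\Omega C$, set $P := B \times_C Q_0$; projectivity of $Q_0$ splits $\ses{A}{P}{Q_0}$, so $P \cong A \oplus Q_0$, while $\ses{\Omega C}{P}{B}$ together with \eqref{Glaz-for-B} bounds $\lambda(P)$, and a Schanuel argument on the summand $A$ finishes. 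Your phrase ``splice the $(m-1)$-syzygy of $A$ against the resolution of $B$'' gestures toward a workable alternative---the mapping cone of a lift $P_\bullet \to Q_\bullet$ of $A \hookrightarrow B$ is a free resolution of $C$ with $i$th term $P_{i-1} \oplus Q_i$, and this does the job---but as written it does not identify the auxiliary exact sequence, ``dual to \eqref{Glaz-for-C}'' is asserted without noticing the extra degree-zero wrinkle (the cone of $B \twoheadrightarrow C$ is not itself a resolution of $A$), and your closing summary reasserts that the Horseshoe syzygy sequence alone suffices for all four parts, which it does not. Finally, for \eqref{Glaz-for-sums} note that your first route invokes closure of $\FP{n}$ under direct summands; in the paper that is Proposition~\ref{properties_FPn}\eqref{FPn-closed-under-direct-summands}, whose proof uses part \eqref{Glaz-for-sums} of the present theorem, so to avoid circularity you should use your ``alternatively'' route via \eqref{Glaz-for-C}, \eqref{Glaz-for-B}, \eqref{Glaz-for-A} on the split sequence, or prove the direct-summand closure independently by iterating Schanuel's Lemma.
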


Note that if the modules $B$ and $C$ in the theorem above are assumed to be in $\FP{n}$ (i.e. $\lambda(B) \geq n$ and $\lambda(C)\geq n$), then $A \in \FP{n-1}$. This means that $\FP{n}$ is not necessarily  closed under kernels of epimorphisms. However, we have the following closure results.

\begin{proposition}[Closure properties of $\FP{n}$] \label{properties_FPn} 
Let $n \geq 0$.
\begin{enumerate}

\item $\FP{n}$ is closed under cokernels of monomorphisms. \label{FPn-closed-under-cokernels}

\item $\FP{n}$ is closed under extensions. \label{FPn-closed-under-extensions}

\item $\FP{n}$ is closed under direct summands. \label{FPn-closed-under-direct-summands}
\end{enumerate} 
\end{proposition}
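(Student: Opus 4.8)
The plan is to deduce all three closure properties directly from Glaz's Theorem~\ref{Glaz-theorem-on-FPn}, translated through Remark~\ref{remark-equiv-lambda}, which identifies membership in $\FP{n}$ with the inequality $\lambda(M) \geq n$. Throughout, I fix $n \geq 0$ and work with a short exact sequence $\ses{A}{B}{C}$ in which the relevant modules are assumed to lie in $\FP{n}$, i.e. have $\lambda \geq n$.

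For part \eqref{FPn-closed-under-cokernels}, suppose $A, B \in \FP{n}$, so $\lambda(A) \geq n$ and $\lambda(B) \geq n$, and let $C$ be the cokernel of the monomorphism $A \hookrightarrow B$. By Theorem~\ref{Glaz-theorem-on-FPn}\eqref{Glaz-for-C} we get $\lambda(C) \geq \min\{\lambda(B), \lambda(A) + 1\} \geq \min\{n, n+1\} = n$, hence $C \in \FP{n}$ by Remark~\ref{remark-equiv-lambda}(1). For part \eqref{FPn-closed-under-extensions}, suppose instead that $A, C \in \FP{n}$, so $\lambda(A) \geq n$ and $\lambda(C) \geq n$; then Theorem~\ref{Glaz-theorem-on-FPn}\eqref{Glaz-for-B} gives $\lambda(B) \geq \min\{\lambda(A), \lambda(C)\} \geq n$, so $B \in \FP{n}$. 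For part \eqref{FPn-closed-under-direct-summands}, let $B \in \FP{n}$ with $B = A \oplus C$; we want $A \in \FP{n}$ (and symmetrically $C$). Here I use Theorem~\ref{Glaz-theorem-on-FPn}\eqref{Glaz-for-sums}: $\lambda(B) = \min\{\lambda(A), \lambda(C)\}$, so $\min\{\lambda(A), \lambda(C)\} = \lambda(B) \geq n$ forces both $\lambda(A) \geq n$ and $\lambda(C) \geq n$, whence $A, C \in \FP{n}$.

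The only genuinely delicate point is part \eqref{FPn-closed-under-direct-summands}: a priori one is given an abstract direct summand $A$ of a module $B \in \FP{n}$ rather than a splitting of a short exact sequence with a known complement. The resolution is simply to observe that a direct summand always comes with its complementary summand, so $B \cong A \oplus C$ with $C = B/A$, and then part~\eqref{Glaz-for-sums} of Glaz's theorem applies verbatim. An alternative, should one wish to avoid even invoking \eqref{Glaz-for-sums}, is to combine \eqref{Glaz-for-B} and \eqref{Glaz-for-A}: from the split sequence $\ses{A}{A\oplus C}{C}$ one has $\lambda(A) \geq \min\{\lambda(B), \lambda(C) - 1\}$, but this introduces a spurious $-1$ and so does not immediately give the clean bound; hence leaning on the equality in \eqref{Glaz-for-sums} is the cleanest route. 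In all three cases the proof is a one-line inequality chase once the $\lambda$-dictionary of Remark~\ref{remark-equiv-lambda} is in place, so I would present it compactly as three short items mirroring the statement.
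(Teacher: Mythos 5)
Your proposal is correct and takes essentially the same approach as the paper: all three closure properties are deduced from Theorem~\ref{Glaz-theorem-on-FPn}, using part~\eqref{Glaz-for-C} for cokernels, part~\eqref{Glaz-for-B} for extensions, and the equality in part~\eqref{Glaz-for-sums} for direct summands, translated through the $\lambda$-dictionary of Remark~\ref{remark-equiv-lambda}.
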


\begin{proof} 
Parts \eqref{FPn-closed-under-cokernels} and \eqref{FPn-closed-under-extensions} follow from Theorem \ref{Glaz-theorem-on-FPn} part \eqref{Glaz-for-C}  and part \eqref{Glaz-for-B}, respectively.

For part \eqref{FPn-closed-under-direct-summands}, suppose that $B \cong A \oplus C$ with $B \in \FP{n}$. Then $A$ and $C$ are finitely generated and from Theorem \ref{Glaz-theorem-on-FPn} part \eqref{Glaz-for-sums}, we get that $n \leq \lambda(B) \leq \lambda(A), \lambda(C)$. Therefore $A,C \in \FP{n}$.
\end{proof}

The lack of closure by kernels of epimorphisms means that $\FP{n}$ fails of being a thick class. Recall that a class of $R$-modules $\W$ is said to be \textbf{thick} if it  is closed under direct summands, and  whenever we are given a short exact sequence 
\[
0 \rightarrow A \rightarrow B \rightarrow C \rightarrow 0
\]
with two out of the three terms $A$, $B$, $C$ in $\W$, then so is the third. 

It follows that $\FP{n}$ is thick if, and only if, it is closed under  kernels of epimorphisms. The question on the thickness of $\FP{n}$ then reduces to knowing under which conditions $\FP{n}$ is closed under kernels of epimorphisms in $\FP{n}$. This will be settled in Section~\ref{sec:n-coherent}. We begin by noting that the class $\FP{\infty}$ is  always thick.

\begin{theorem} \label{FPinfty-is-thick}
For any ring the class $\FP{\infty}$ is thick.
\end{theorem}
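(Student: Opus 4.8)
The plan is to derive the thickness of $\FP{\infty}$ directly from the numerical estimates of Theorem~\ref{Glaz-theorem-on-FPn} together with Proposition~\ref{properties_FPn}, using the characterization $M \in \FP{\infty} \Leftrightarrow \lambda(M) = \infty$ (equivalently, $\FP{\infty} = \bigcap_{n \geq 0} \FP{n}$) recorded in and around Remark~\ref{remark-equiv-lambda}. The guiding observation is that the one feature preventing $\FP{n}$ from being thick — the ``$-1$'' shift in Theorem~\ref{Glaz-theorem-on-FPn} part~\eqref{Glaz-for-A}, which is exactly what was used right after that theorem to see that $\FP{n}$ need not be closed under kernels of epimorphisms — becomes harmless once $n = \infty$, because $\infty - 1 = \infty$.

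First I would dispose of closure under direct summands: if $B \cong A \oplus C$ with $B \in \FP{\infty} = \bigcap_{n} \FP{n}$, then $B \in \FP{n}$ for every $n$, so by Proposition~\ref{properties_FPn} part~\eqref{FPn-closed-under-direct-summands} we get $A, C \in \FP{n}$ for every $n$, whence $A, C \in \FP{\infty}$. Then, given a short exact sequence $\ses{A}{B}{C}$ with two of the three modules in $\FP{\infty}$ — that is, with two of $\lambda(A), \lambda(B), \lambda(C)$ equal to $\infty$ — I would split into three cases and apply Theorem~\ref{Glaz-theorem-on-FPn}. If $\lambda(A) = \lambda(B) = \infty$, part~\eqref{Glaz-for-C} gives $\lambda(C) \geq \min\{\lambda(B), \lambda(A)+1\} = \infty$. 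If $\lambda(A) = \lambda(C) = \infty$, part~\eqref{Glaz-for-B} gives $\lambda(B) \geq \min\{\lambda(A), \lambda(C)\} = \infty$. If $\lambda(B) = \lambda(C) = \infty$, part~\eqref{Glaz-for-A} gives $\lambda(A) \geq \min\{\lambda(B), \lambda(C) - 1\} = \infty$, so in particular $A$ is finitely generated and $\lambda(A) = \infty$. In each case the third module has $\lambda = \infty$, hence lies in $\FP{\infty}$ by Remark~\ref{remark-equiv-lambda}, and thickness follows.

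The step I expect to be the conceptual crux — although here it reduces to a one-line computation — is the third case, closure under kernels of epimorphisms in $\FP{\infty}$: for finite $n$ one only gets $\lambda(A) \geq n-1$, which is precisely why $\FP{n}$ can fail to be thick (see Example~\ref{2-coherent-ring-example}), and the whole content of this theorem is that the passage to $n = \infty$ absorbs that loss of one. A minor point to verify while writing it out is that the convention $\lambda(M) = -1$ for non-finitely-generated $M$ is consistent with treating $\infty$ as an absorbing element for both $\min$ and the shift by $-1$, so that $\lambda(A) \geq \infty$ indeed forces $A$ to be finitely generated with a finitely generated free resolution; this is already encoded in Remark~\ref{remark-equiv-lambda}, so no separate argument is needed.
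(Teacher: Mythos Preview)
Your proposal is correct and follows essentially the same approach as the paper: both arguments reduce everything to Theorem~\ref{Glaz-theorem-on-FPn} via the characterization $\lambda(M) = \infty \Leftrightarrow M \in \FP{\infty}$, with the key case being closure under kernels of epimorphisms, where part~\eqref{Glaz-for-A} gives $\lambda(A) \geq \min\{\lambda(B), \lambda(C)-1\} = \infty$. The only cosmetic difference is that the paper handles extensions and cokernels of monomorphisms by invoking Proposition~\ref{properties_FPn} through the intersection $\FP{\infty} = \bigcap_{n \geq 0} \FP{n}$, whereas you recompute those two cases directly from parts~\eqref{Glaz-for-C} and~\eqref{Glaz-for-B} of Theorem~\ref{Glaz-theorem-on-FPn}; since Proposition~\ref{properties_FPn} is itself derived from those same inequalities, this is the same argument unpacked.
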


\begin{proof} 
The equality $\FP{\infty} = \bigcap_{n \geq 0} \FP{n}$, along with Proposition~\ref{properties_FPn} gives us that $\FP{\infty}$ is closed under direct summands, extensions, and cokernels of monomorphisms. 

Next, consider a short exact sequence $0 \rightarrow A \rightarrow B \rightarrow C \rightarrow 0$ and suppose $B, C \in \FP{\infty}$ (i.e., $\lambda(B) = \infty$ and $\lambda(C) = \infty$), then part \eqref{Glaz-for-A} of Theorem \ref{Glaz-theorem-on-FPn} implies that $\lambda(A) \geq \min\{ \lambda(B),\lambda(C)-1 \} = \infty$. This says that $A \in \FP{\infty}$. 
\end{proof}

To end this section we recall some results describing the relation between the classes $\FP{n}$ and $\FP{\infty}$, and the derived functors to $\Hom_R(-,-)$ and $-\otimes_R -$.

\begin{theorem}[{\cite[Theorem 2]{Brown-Homological}}, {\cite[Theorem 1.3]{bieri}}] \label{FPn_Ext_Tor}
The following conditions are equivalent for every right $R$-module $M$ and every $n \geq 0$:
\begin{enumerate}
\item $M \in \FP{n+1}$.
\item $\Ext^i_R(M,-)$ commutes with direct limits for all $0 \leq i \leq n$.
\item $\Tor_i^R(M,-)$ commutes with direct products for all $0 \leq i \leq n$. 
\end{enumerate} 
\end{theorem}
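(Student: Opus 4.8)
The plan is to prove the three-way equivalence by the cycle $(1)\Rightarrow(2)\Rightarrow(3)\Rightarrow(1)$, since each implication can be obtained by a dimension-shifting argument from the base case $n=0$. First I would establish the case $n=0$ separately: a right $R$-module $M$ lies in $\FP{1}$ (is finitely presented) if and only if $\Hom_R(M,-)$ commutes with direct limits, and if and only if $M\otimes_R-$ commutes with direct products; these are classical facts (the Hom characterization is essentially Lazard/Lenzing, and the Tor characterization dualizes it via the natural map $M\otimes_R\prod_i N_i\to\prod_i(M\otimes_R N_i)$). The key observation for the inductive step is that if $F_0\to M\to 0$ is a finite free presentation with $F_0$ finitely generated free and $K=\kker{F_0\to M}$, then the long exact sequences for $\Ext$ and $\Tor$, together with the fact that $\Ext^i_R(F_0,-)$ and $\Tor^R_i(F_0,-)$ vanish for $i\geq 1$ and commute with both limits and colimits in degree $0$, give isomorphisms (natural in the second variable) $\Ext^i_R(M,-)\cong\Ext^{i-1}_R(K,-)$ and $\Tor^R_i(M,-)\cong\Tor^R_{i-1}(K,-)$ for $i\geq 2$, plus the usual six-term exact comparison in low degrees.

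For $(1)\Rightarrow(2)$: assume $M\in\FP{n+1}$. By Schanuel's Lemma (Lemma~\ref{Schanuel}) the syzygy $K$ above lies in $\FP{n}$, and one proceeds by induction on $n$. The case $n=0$ is the base case. For the inductive step, $\Ext^0_R(M,-)=\Hom_R(M,-)$ commutes with direct limits because $M$ is finitely presented (it lies in $\FP{1}$), and for $1\leq i\leq n$ the isomorphism $\Ext^i_R(M,-)\cong\Ext^{i-1}_R(K,-)$ reduces the claim for $M$ in degrees $1,\dots,n$ to the claim for $K\in\FP{n}$ in degrees $0,\dots,n-1$, which holds by the induction hypothesis. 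One must check that these isomorphisms are compatible with the canonical maps out of a direct limit, which follows from naturality of the connecting homomorphisms.

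For $(2)\Rightarrow(3)$ and $(3)\Rightarrow(1)$: the implication $(2)\Rightarrow(3)$ can be handled by the same dimension-shifting bootstrap, using that $\Hom_R(M,-)$ commuting with direct limits forces $M$ finitely presented (base case $n=0$), hence $K\in\FP{n}$ exists, and then the parallel $\Tor$ isomorphisms propagate the direct-product statement up from $K$; alternatively one can route both $\Ext$ and $\Tor$ statements through the intrinsic condition ``$M\in\FP{n+1}$'' and prove $(1)\Leftrightarrow(2)$ and $(1)\Leftrightarrow(3)$ independently. For $(3)\Rightarrow(1)$ one runs the induction in reverse: from $\Tor^R_i(M,-)$ commuting with direct products for $0\leq i\leq n$ one first gets (the $i=0$ case) that $M$ is finitely presented, fixes a finite presentation with syzygy $K$, uses the Tor isomorphisms to deduce $\Tor^R_j(K,-)$ commutes with direct products for $0\leq j\leq n-1$, concludes $K\in\FP{n}$ by induction, and then splices the finite $(n-1)$-presentation of $K$ onto $F_0\to M$ (extending it to a finite $(n+1)$-presentation, as guaranteed by Schanuel's Lemma) to obtain $M\in\FP{n+1}$.

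The main obstacle I anticipate is not any single implication but the careful bookkeeping of which homological degrees are controlled at each stage of the induction, and making sure the natural isomorphisms $\Ext^i_R(M,-)\cong\Ext^{i-1}_R(K,-)$, $\Tor^R_i(M,-)\cong\Tor^R_{i-1}(K,-)$ genuinely intertwine the canonical comparison maps (for direct limits, resp.\ direct products) rather than merely being abstract isomorphisms of functors; establishing this compatibility cleanly is where the real work lies. Since the statement is quoted from \cite{Brown-Homological} and \cite{bieri}, I would in practice cite those sources for the detailed verification and only sketch the dimension-shift, but the above is the self-contained route.
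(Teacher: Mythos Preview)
Your proposal is correct and follows the standard dimension-shifting argument that underlies the proofs in the cited references. However, there is nothing to compare against: the paper does not prove this theorem at all. It is stated with attribution to \cite{Brown-Homological} and \cite{bieri} and used as a black box (for instance in the proofs of Propositions~\ref{properties-FPninj} and~\ref{properties-FPnflat}), with no accompanying proof environment. You yourself note this at the end of your proposal, and indeed citing those sources is exactly what the paper does.

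One small remark on your sketch of $(2)\Rightarrow(3)$: as written, the phrase ``hence $K\in\FP{n}$ exists'' is a bit quick, since knowing only that $M$ is finitely presented gives $K$ finitely generated, not $K\in\FP{n}$. What actually makes the argument go through is that the low-degree exact sequence $0\to\Hom_R(M,-)\to\Hom_R(F_0,-)\to\Hom_R(K,-)\to\Ext^1_R(M,-)\to 0$, combined with exactness of direct limits and the five lemma, shows $\Hom_R(K,-)$ commutes with direct limits; together with the shifted isomorphisms this gives condition~(2) for $K$ in degrees $0,\dots,n-1$, and then the full induction hypothesis (the equivalence at level $n-1$, not just one implication) yields both $K\in\FP{n}$ and the $\Tor$ statement for $K$. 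You allude to this in your final paragraph, so this is more a matter of phrasing than a gap.
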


Note that the well known result of every finitely presented module $M$ commuting with direct limits under $\Hom_R(M,-)$, in terms of finitely $n$-presented modules, is the case $n=0$ in the previous theorem. The version for $\FP{\infty}$ reads as follows:

\begin{theorem}[{\cite[Corollary 1.6]{bieri}}]
The following statements are equivalent for every right $R$-module $M$:
\begin{enumerate}
\item $M \in \FP{\infty}$.
\item $\Ext^i_R(M,-)$ commutes with direct limits for all $i \geq 0$.
\item $\Tor^R_i(M,-)$ commutes with direct products for all $i \geq 0$.
\end{enumerate}
\end{theorem}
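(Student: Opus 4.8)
The plan is to reduce everything to the finite-level statement of Theorem~\ref{FPn_Ext_Tor} by way of the identity $\FP{\infty} = \bigcap_{n \geq 0} \FP{n}$ recorded in the discussion following \eqref{chain-of-FPn}. The only genuine content is a harmless interchange of quantifiers: the assertion ``$\Ext^i_R(M,-)$ commutes with direct limits for all $i \geq 0$'' is equivalent to ``for every $n \geq 0$, $\Ext^i_R(M,-)$ commutes with direct limits for all $0 \leq i \leq n$'', and similarly for the $\Tor$ statement with direct products.

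First I would prove the equivalence of (1) and (2). If $M \in \FP{\infty}$, then by \eqref{chain-of-FPn} together with $\FP{\infty} = \bigcap_{n \geq 0} \FP{n}$ we have $M \in \FP{n+1}$ for every $n \geq 0$; applying the implication (1)$\Rightarrow$(2) of Theorem~\ref{FPn_Ext_Tor} for each such $n$ shows that $\Ext^i_R(M,-)$ commutes with direct limits for all $0 \leq i \leq n$, and since $n$ is arbitrary this gives (2). Conversely, assume (2). For each fixed $n \geq 0$, the functors $\Ext^i_R(M,-)$ with $0 \leq i \leq n$ commute with direct limits, so the implication (2)$\Rightarrow$(1) of Theorem~\ref{FPn_Ext_Tor} yields $M \in \FP{n+1} \subseteq \FP{n}$; as $n$ was arbitrary, $M \in \bigcap_{n \geq 0} \FP{n} = \FP{\infty}$, which is (1).

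The equivalence of (1) and (3) is carried out in exactly the same way, invoking the third clause of Theorem~\ref{FPn_Ext_Tor} — commutation of $\Tor^R_i(M,-)$ with direct products — in place of the second. Concatenating the two equivalences closes the cycle (1)$\Leftrightarrow$(2)$\Leftrightarrow$(3).

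There is essentially no serious obstacle: the statement is a formal consequence of the already-established finite case and of the description of $\FP{\infty}$ as the intersection of the $\FP{n}$. The only point requiring (minimal) care is the passage from the family of conditions indexed by pairs $(i,n)$ with $0 \le i \le n$ to the single condition ``for all $i \ge 0$'', which is immediate since every $i$ is $\le$ some $n$.
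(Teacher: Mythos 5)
The paper does not actually prove this statement; it is quoted verbatim from Bieri \cite[Corollary 1.6]{bieri}, just as Theorem~\ref{FPn_Ext_Tor} is quoted from Brown and Bieri. So there is no in-paper proof to compare against. That said, your argument is correct and is exactly the natural formal deduction one would give: it reduces the $\infty$ case to the finite case via the identity $\FP{\infty} = \bigcap_{n \geq 0} \FP{n}$, using the index shift $\FP{n+1} \leftrightarrow \{0 \leq i \leq n\}$ from Theorem~\ref{FPn_Ext_Tor} in both directions, and noting that the descending chain \eqref{chain-of-FPn} takes care of the off-by-one between ``$M \in \FP{n+1}$ for all $n$'' and ``$M \in \bigcap_{n\geq 0}\FP{n}$.'' Bieri's original proof works directly from the definitions rather than by invoking the finite-level theorem as a black box, but your derivation is shorter given what the paper has already set up and introduces no gaps.
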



\section{$n$-Coherent rings}\label{sec:n-coherent}

Recall that a ring $R$ said to be (left) \textit{coherent} if every finitely generated (left) ideal of $R$ also is finitely presented. Another equivalent definition for coherent ring reads as follows: $R$ is coherent if, and only if, every finitely generated submodule of a free $R$-module is also finitely presented. A similar result can be stated for Noetherian rings, namely that over a Noetherian ring, any submodule of a finitely generated free module is finitely generated. In terms of finitely $n$-presented modules, we have that a module is  $1$-presented if, and only if, it is finitely presented. Therefore, we can expect an equivalence for coherent rings in terms of finitely $n$-presented modules; we record this as the following remark.

\begin{remark} 
A ring $R$ is coherent if, and only if, every module in $\FP{1}$ is also in $\FP{2}$. 
\end{remark}

 Following these observations, it seems natural to present the next definition.

\begin{definition}\label{def-n-coherent} 
A ring $R$ is \textbf{$n$-coherent} if $\FP{n} \subseteq \FP{n+1}$.
\end{definition}

This way a coherent ring is a $1$-coherent ring, and a Noetherian ring is a $0$-coherent ring. An example of a $2$-coherent ring was given in Example \ref{2-coherent-ring-example}. 

The idea of $n$-coherent rings has been studied before, and it seems to have been first introduced (as shown above) in the literature by D. L. Costa \cite{Costa}. However, there are several variations of the definitions of $n$-coherent rings. For example, D. E. Dobbs, S. E. Kabbaj, and N. Mahdou \cite{dobbs}, work with what they call \emph{strong $n$-coherent rings}, and it is this definition that agrees with Definition~\ref{def-n-coherent}. Other authors, such as L. Mao and N. Ding \cite{MaoDing2007}, introduce an additional  parameter; this way $R$ is $n$-coherent (as shown in Definition \ref{def-n-coherent}) if, and only if, $R$ is \emph{$(n,\infty)$-coherent} (as in  \cite{MaoDing2007}). In all cases there seem to be some intersection with Definition \ref{def-n-coherent}.


We immediately note from Definition \ref{def-n-coherent} that an $n$-coherent ring, is also a $k$-coherent for all $k \geq n$. By convention, any ring  is $\infty$-coherent (this is motivated by the naive observation that ``$\FP{\infty} \subseteq \FP{\infty+1}$"). Thus similarly to the chain of inclusion \eqref{chain-of-FPn}, we obtain an strictly ascending  chain of inclusions of classes of rings: 
\begin{equation} \label{chain-for-n-coherent}
\Coh{0} \subset \Coh{1} \subset \Coh{2} \subset \cdots \subset \Coh{n} \subset \cdots \Coh{\infty},
\end{equation}
where $\Coh{n}$ denotes the class of all $n$-coherent rings. 

The chain above suggests a connection between a ring being $n$-coherent, and the class of finitely $n$-presented $R$-modules. Indeed, using our terminology we have the following.

\begin{remark} The following results are well known:
\begin{enumerate}
\item $R \in \Coh{0}$ if and only if the class $\FP{0}$ is thick.
\item $R \in \Coh{1}$ if and only if the class $\FP{1}$ is thick.
\item For any ring $R$ (i.e. for any $R \in \Coh{\infty}$) the class $\FP{\infty}$ is thick (see Theorem \ref{FPinfty-is-thick}).
\end{enumerate}
\end{remark}

The following theorem settles the conditions for the cases in between.

\begin{theorem}\label{characterization-of-n-coherent} 
Let $R$ be a ring and $n \geq 0$. The following are equivalent.
\begin{enumerate}
\item $R$ is $n$-coherent. \label{R-n-coh}

\item $\FP{n}$ is thick. \label{FPn-thick}

\item $\FP{n} = \FP{\infty}$. \label{FPn=FPinfty} 
\end{enumerate} 
\end{theorem}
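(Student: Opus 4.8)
The plan is to prove the cycle of implications $\eqref{FPn=FPinfty} \Rightarrow \eqref{FPn-thick} \Rightarrow \eqref{R-n-coh} \Rightarrow \eqref{FPn=FPinfty}$, using Schanuel's Lemma (Lemma~\ref{Schanuel}) and Glaz's Theorem~\ref{Glaz-theorem-on-FPn} as the main tools.

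The implication $\eqref{FPn=FPinfty} \Rightarrow \eqref{FPn-thick}$ is immediate from Theorem~\ref{FPinfty-is-thick}. The implication $\eqref{FPn-thick} \Rightarrow \eqref{R-n-coh}$ should also be short: assume $\FP{n}$ is thick and take $M \in \FP{n}$; I want $M \in \FP{n+1}$, i.e. the $(n+1)$-st syzygy $K$ in a finite $n$-presentation $0 \to K \to F_n \to \cdots \to F_0 \to M \to 0$ is finitely generated. Breaking this long exact sequence into short exact sequences $0 \to K \to F_n \to C \to 0$ and the pieces computing the lower syzygies, one sees $C$ (the image of $F_n \to F_{n-1}$) sits in $\FP{n-1}$, but more carefully, working from $M$ downward, each intermediate cokernel lies in $\FP{n}$ when $M$ does, by repeated use of closure under kernels of epimorphisms (which thickness grants) together with $F_i \in \FP{\infty} \subseteq \FP{n}$. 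In particular $K = \kker{F_n \to F_{n-1}}$ is a kernel of an epimorphism $F_n \to C'$ with both $F_n, C' \in \FP{n}$, hence $K \in \FP{n}$, so $K$ is finitely generated and $M \in \FP{n+1}$. Thus $R$ is $n$-coherent.

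The substantive implication is $\eqref{R-n-coh} \Rightarrow \eqref{FPn=FPinfty}$. Assume $\FP{n} \subseteq \FP{n+1}$; I must show $\FP{n} \subseteq \FP{\infty}$, i.e. $\FP{n} \subseteq \FP{n+k}$ for all $k \geq 1$, which then gives $\FP{n} \subseteq \bigcap_{m \geq 0}\FP{m} = \FP{\infty}$ (the intersection identity was established right after \eqref{chain-of-FPn}). I would do this by induction on $k$, the base case $k=1$ being the hypothesis. For the inductive step, suppose $\FP{n} \subseteq \FP{n+k}$ and let $M \in \FP{n}$; write a finite $(n+k)$-presentation $F_{n+k} \to \cdots \to F_0 \to M \to 0$ and let $K$ be its $(n+k)$-th syzygy, so $K$ is finitely generated. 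By Schanuel's Lemma, $K$ (up to free summands) is the $n$-th syzygy of $M$, but more usefully: letting $K'$ be the $n$-th syzygy of $M$ (finitely generated, since $M \in \FP{n}$, and in fact $K' \in \FP{n}$ because $R$ is $n$-coherent makes $\FP{n}$ closed under syzygies), $K'$ lies in $\FP{n} \subseteq \FP{n+k}$ by the inductive hypothesis, so $K'$ admits a finite $(n+k)$-presentation. Splicing a finite $n$-presentation of $M$ with a finite $(n+k)$-presentation of its $n$-th syzygy $K'$ yields a finite $(n+k+1)$-presentation of $M$, whence $M \in \FP{n+k+1}$. This completes the induction, and therefore $\FP{n} = \FP{\infty}$ (the reverse inclusion is \eqref{chain-of-FPn}).

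The main obstacle is the bookkeeping with syzygies: I must be careful that the relevant syzygy modules are genuinely finitely generated and, crucially, that under the $n$-coherence hypothesis the class $\FP{n}$ is closed under taking syzygies of its members (equivalently, kernels of epimorphisms $F \twoheadrightarrow M$ with $F$ finitely generated free and $M \in \FP{n}$). This closure follows from Glaz's Theorem~\ref{Glaz-theorem-on-FPn}\eqref{Glaz-for-A}: from $0 \to \Omega M \to F \to M \to 0$ with $\lambda(F) = \infty$ and $\lambda(M) \geq n$ we get $\lambda(\Omega M) \geq \min\{\infty, n - 1\} = n - 1$, and then the hypothesis $\lambda(M) \geq n \Rightarrow \lambda(M) \geq n+1$ applied appropriately (or a direct splicing argument) upgrades this to $\lambda(\Omega M) \geq n$. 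I would isolate this as the key lemma inside the proof and then the splicing arguments become routine. One should also double-check the degenerate small cases $n = 0$ and $n = 1$, where the statement recovers the classical characterizations of Noetherian and coherent rings, to make sure the syzygy indexing is consistent.
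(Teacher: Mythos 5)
Your overall strategy matches the paper's — the cycle $(3)\Rightarrow(2)\Rightarrow(1)\Rightarrow(3)$, thickness via Glaz/Schanuel, and inductive splicing of syzygies — but two of the steps in your $(1)\Rightarrow(3)$ argument do not quite close up. First, the arithmetic: splicing the truncation $0 \to \Omega^n M \to F_{n-1} \to \cdots \to F_0 \to M \to 0$ with a finite $(n+k)$-presentation of $\Omega^n M$ produces a finite $(2n+k)$-presentation of $M$, not a finite $(n+k+1)$-presentation; this is still $\geq n+k+1$ when $n \geq 1$, but at $n = 0$ the $0$-th syzygy is $M$ itself and the induction runs in circles, so the Noetherian case is not covered by your argument as written. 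Second, closure of $\FP{n}$ under first syzygies does not follow from Theorem~\ref{Glaz-theorem-on-FPn}\eqref{Glaz-for-A} alone, which only gives $\lambda(\Omega^1 M) \geq \min\{\infty, n-1\} = n-1$; the upgrade to $\lambda(\Omega^1 M) \geq n$ needs the $n$-coherence hypothesis to put $M$ in $\FP{n+1}$ and then Lemma~\ref{Schanuel} to transfer the resulting finiteness to the syzygy taken from your chosen resolution — you gesture at this but should make it a genuine lemma. Both problems disappear if, like the paper, you always splice at the \emph{first} syzygy: $M \in \FP{n}$ gives $M \in \FP{n+1}$ by $n$-coherence, hence $\Omega^1 M \in \FP{n}$, hence $\Omega^1 M \in \FP{n+k}$ by the inductive hypothesis, and splicing $0 \to \Omega^1 M \to F_0 \to M \to 0$ with a finite $(n+k)$-presentation of $\Omega^1 M$ gives exactly a finite $(n+k+1)$-presentation of $M$, uniformly for all $n \geq 0$. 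Your $(2)\Rightarrow(1)$ also works but is longer than necessary — one application of thickness to $0 \to \kker{f_0} \to F_0 \to M \to 0$ already lands $\kker{f_0}$ in $\FP{n}$, and splicing its finite $n$-presentation finishes.
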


\begin{proof} \

\eqref{FPn=FPinfty} $\Rightarrow$ \eqref{FPn-thick}. This is immediate from Theorem \ref{FPinfty-is-thick}.  

\eqref{FPn-thick} $\Rightarrow$ \eqref{R-n-coh}. Let $M \in \FP{n}$. Then there is an exact sequence
\[
F_n \to F_{n-1} \rightarrow \cdots \rightarrow F_2 \xrightarrow{} F_1 \rightarrow F_0 \xrightarrow{f_0} M \rightarrow 0,
\]
from which we obtain the exact sequence $\ses{\kker{f_0}}{F_0}{M}$. Since, by hypothesis $\FP{n}$ is thick, and $F_0$ and $M$ are both in $\FP{n}$, then $\kker{f_0} \in \FP{n}$. Thus $\kker{f_0}$ has a finite $n$-presentation, which along with the previous short exact sequence, gives a finite $(n+1)$-presentation of $M$. This means that $M \in \FP{n+1}$.

\eqref{R-n-coh} $\Rightarrow$ \eqref{FPn=FPinfty}. Let $M \in \FP{n}$, then similarly as above and using the first syzygy of $M$ (occurring in a finite $n$-presentation of $M$), we obtain a finite $(n+1)$-presentation of $M$, i.e. $M \in \FP{n+1}$. Next apply the same process to the second syzygy of $M$ to obtain that $M \in \FP{n+2}$. Iterating this procedure yields a finite $k$-presentation of $M$ for all $k \geq n$. Hence $M \in \bigcap_{n \geq 0} \FP{n} = \FP{\infty}$.
\end{proof}

\begin{remark}
The ring in Example \ref{strictly-infty-ring} is not $n$-coherent for any $n \geq 0$. Since if we suppose if $k$-coherent for some $k \geq 0$, then the chain in \eqref{chain-of-FPn} would collapse at $k$, but we established that this can not happen. We call rings with this non collapsing property, \textbf{strictly $\infty$-coherent} rings.
\end{remark}

Now we are able to provide a more precise result indicating how the $n$-coherency of a ring relates to the the class $\FP{n}$, by combining Proposition \ref{properties_FPn} and Theorem \ref{characterization-of-n-coherent}.

\begin{corollary}\label{FPn-closed-kernel}
A ring $R$ is $n$-coherent if, and only if, the class $\FP{n}$ is closed by kernels of epimorphisms. 
\end{corollary}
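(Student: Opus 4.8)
The plan is to deduce this corollary directly from Theorem~\ref{characterization-of-n-coherent} together with Proposition~\ref{properties_FPn}, since almost all the work has already been done. Recall that by Theorem~\ref{characterization-of-n-coherent}, $R$ is $n$-coherent if and only if $\FP{n}$ is thick, and that a class is thick precisely when it is closed under direct summands and under the ``two-out-of-three'' property for short exact sequences. By Proposition~\ref{properties_FPn}, $\FP{n}$ is \emph{always} closed under direct summands, under cokernels of monomorphisms, and under extensions — regardless of whether $R$ is $n$-coherent. So the only piece of thickness that can possibly fail is closure under kernels of epimorphisms, and that is exactly the statement we want to characterize.

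Concretely, I would argue as follows. First, suppose $R$ is $n$-coherent. Then $\FP{n}$ is thick by Theorem~\ref{characterization-of-n-coherent}, hence in particular closed under kernels of epimorphisms: given a short exact sequence $\ses{A}{B}{C}$ with $B, C \in \FP{n}$, thickness forces $A \in \FP{n}$. Conversely, suppose $\FP{n}$ is closed under kernels of epimorphisms. I would verify that $\FP{n}$ is then thick by checking all the required closure conditions: closure under direct summands and under extensions come for free from Proposition~\ref{properties_FPn}, and the three two-out-of-three cases are: (i) $A,C\in\FP{n}\Rightarrow B\in\FP{n}$, which is closure under extensions (Proposition~\ref{properties_FPn}\eqref{FPn-closed-under-extensions}); (ii) $A,B\in\FP{n}\Rightarrow C\in\FP{n}$, which is closure under cokernels of monomorphisms (Proposition~\ref{properties_FPn}\eqref{FPn-closed-under-cokernels}); and (iii) $B,C\in\FP{n}\Rightarrow A\in\FP{n}$, which is exactly the hypothesis. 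Hence $\FP{n}$ is thick, and by Theorem~\ref{characterization-of-n-coherent} again, $R$ is $n$-coherent.

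I do not anticipate any genuine obstacle here — the corollary is essentially a repackaging of Theorem~\ref{characterization-of-n-coherent}. The one small point requiring a moment of care is making sure the definition of thick is applied correctly: one must observe that closure under cokernels of monomorphisms is the same as the ``$A,B\in\W\Rightarrow C\in\W$'' case (viewing $C$ as the cokernel of $A\hookrightarrow B$), and closure under extensions is the ``$A,C\in\W\Rightarrow B\in\W$'' case, so that together with the new hypothesis all three two-out-of-three instances are covered. After that bookkeeping the proof is immediate.

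\begin{proof}
If $R$ is $n$-coherent, then by Theorem~\ref{characterization-of-n-coherent} the class $\FP{n}$ is thick, and thus closed under kernels of epimorphisms (taking the kernel of an epimorphism $B \twoheadrightarrow C$ with $B, C \in \FP{n}$ and applying the two-out-of-three property).

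Conversely, suppose $\FP{n}$ is closed under kernels of epimorphisms. By Proposition~\ref{properties_FPn}, $\FP{n}$ is closed under direct summands, under cokernels of monomorphisms, and under extensions. Given a short exact sequence $\ses{A}{B}{C}$ with two of the three terms in $\FP{n}$: if $A, C \in \FP{n}$ then $B \in \FP{n}$ by closure under extensions; if $A, B \in \FP{n}$ then $C \in \FP{n}$ by closure under cokernels of monomorphisms; and if $B, C \in \FP{n}$ then $A \in \FP{n}$ by hypothesis. Together with closure under direct summands, this shows $\FP{n}$ is thick, and hence $R$ is $n$-coherent by Theorem~\ref{characterization-of-n-coherent}.
\end{proof}
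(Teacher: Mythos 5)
Your proof is correct and follows exactly the route the paper intends: the corollary is stated as a direct combination of Proposition~\ref{properties_FPn} and Theorem~\ref{characterization-of-n-coherent}, and your careful check that closure under kernels of epimorphisms is the only missing piece of thickness is precisely the bookkeeping required.
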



Note that if $R$ is Noetherian, then $\FP{0} = \FP{1}$, and so the notion of \textit{absolutely pure} (or \textit{FP-injective}) modules, introduced by Maddox \cite{maddox} and  Stenst\"{o}rm \cite{Stenstrom}, and the notion of injective modules agree. This is not the case, when $R$ is coherent, as these two classes do not coincide. So it seems natural to explore $R$-modules which are injective with respecto to $\FP{n}$, but for any ring $R$. This is what we do in the next section. 


\section{$\text{FP}_n$-injective and ${\rm FP}_n$-flat modules} \label{relative-hom-alg}

The class of injective modules over a ring $R$ can be defined as the collection of all $R$-modules $N$, such that $\Ext^1_R(M, N) = 0$ for all finitely generated modules $M$, i.e. for all $M \in \FP{0}$. 

Similarly,  Stenstr\"{o}m \cite{Stenstrom} defines the class of \textbf{FP-injective} as those modules $M$ such that $\Ext^1_R(F,M) = 0$ for all finitely presented modules $F$, i.e. for all $F \in \FP{1}$. Recently in \cite{BGH}, the class of \textbf{absolutely clean} modules where defined as those modules $M$ such that $\Ext^1_R(F,M) = 0$ for all $F \in \FP{\infty}$. Motivated by this, we present the following definition.

\begin{definition} \label{FPn-injective} 
We say that a left $R$-module $M$ is \textbf{$\text{FP}_n$-injective} if $\Ext^1_R(F, M) = 0$ for all $F \in \FP{n}$ (this may include the case $n = \infty$). We denote by $\FPinj{n}$ the class of all $\text{FP}_n$-injective modules. 
\end{definition}

With this definition, $M$ is injective if, and only if, $M$ is $\text{FP}_0$-injective, and $M$ is FP-injective if, and only if, $M$ is $FP_1$-injective. The case of $\text{FP}_{\infty}$-injective modules (i.e. absolutely clean) is the same as introduced in \cite{BGH}. We also observe that this definition of $\text{FP}_n$-injective modules differs from that of J. Chen and N. Ding \cite{Ding-Chen-n-coherent} for $n>1$ (where they consider orthogonality with respect to $\Ext^n_R(-,-)$ instead). 

We continue with a definition of flatness with respect to the class $\FP{n}$.

\begin{definition}\label{FPn-flat} 
We say that a left $R$-module $M$ is \textbf{\text{$\text{FP}_n$-flat}} if $\Tor_1^R(F, M) = 0$ for all $F \in \FP{n}$ (this may include the case $n = \infty$). We denote by $\FPflat{n}$ the class of all $\text{FP}_n$-flat modules.
\end{definition}

Note that flat modules coincide with the FP$_0$-flat modules, and \textit{level modules}, in the sense of \cite{BGH}, coincide with the FP$_{\infty}$-flat modules, i.e. those $M \in \Rmod$ for which $\Tor_1^R(F, M) = 0$ for all $F \in \FP{\infty}$. 

From the descending chain of inclusions \eqref{chain-of-FPn}, we get the following ascending chains of inclusions: 
\begin{equation}\label{chain-of-FPn-inj}
\FPinj{0} \subseteq \FPinj{1} \subseteq \cdots \subseteq \FPinj{n} \subseteq \cdots \subseteq \FPinj{\infty}
\end{equation}
and 
\begin{equation}\label{chain-of-FPn-flat}
\FPflat{0} \subseteq \FPflat{1} \subseteq \cdots \subseteq \FPflat{n} \subseteq \cdots \subseteq \FPflat{\infty}.
\end{equation}

We immediately note that for $n \geq 0$, the class of injective modules is contained in $\FPinj{n}$, and that the class of flat modules is contained in $\FPflat{n}$.

\begin{remark} 
As we have previously mentioned, the case $n=0$ is the study of injective modules; the case $n=1$ is the case of absolutely pure modules, or FP-injective modules, which has been studied in \cite{Stenstrom} and \cite{Ding-Chen-n-coherent}. Thus for the rest of this article, we only focus on the cases $n>1$. 
\end{remark}

 
We  study how the classes $\FPinj{n}$ and $\FPflat{n}$ relate via the notion of character modules. Recall that the \textbf{character module} or \textbf{Pontryagin dual module} of a left $R$-module $M$ is defined as the right $R$-module $M^+ := \Hom_{\mathbb{Z}}(M,\mathbb{Q/Z})$. Observe that $\mathbb{Q / Z}$ is an injective cogenerator of $\Rmod$ (see \cite{Borceaux}) which among other properties gives us that $M=0$, whenever $M^+=0$. 
The following results are standard and included here for further reference. The reader can see the proof in \cite[Lemma 1.2.11]{Gobel}.


\begin{theorem}[Ext-Tor relations]\label{Tor-Ext-relations}
Let $R$ and $S$ be rings. 
\begin{enumerate}
\item \label{Ext(M-N-dual)-Tor(M-N)-dual} Let $M$ be a right $R$-module, and $N$ be an $(S,R)$-bimodule. If $I$ is an injective left $S$-module, then for all $i \geq 0$ 
\[
\Ext^i_R(M,\Hom_S(N,I)) \cong \Hom_S(\Tor_i^R(M,N),I).
\]
In particular,
\[
\Ext^1_R(M,N^+) \cong \Tor_1^R(M,N)^+.
\]


\item \label{Tor(F-N-dual)-Ext(F-N)-dual} If $F$ is right $R$-module in $\FP{n+1}$, $N$ is an $(S,R)$-bimodule, and $M$ is an injective left $S$-module, then for each $1 \leq i \leq n$ 
\[
\Tor^R_i(F, \Hom_S(N,M)) \cong \Hom_S(\Ext^i_R(F,N),M).
\] 
In particular, 
\[
\Tor^R_1(F,N^+) \cong \Ext^1_R(F,N)^+.
\]
\end{enumerate}
\end{theorem}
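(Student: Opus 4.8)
The plan is to deduce both statements from a single natural isomorphism of functors, the so-called Hom-tensor (adjunction) isomorphism combined with the balancing of derived functors, and then specialize. First I would set up the adjunction: for a right $R$-module $A$, an $(S,R)$-bimodule $N$, and a left $S$-module $I$, there is a natural isomorphism
\[
\Hom_S(A \otimes_R N, I) \cong \Hom_R(A, \Hom_S(N, I)),
\]
where $\Hom_S(N,I)$ carries its natural right $R$-module structure. This is the standard tensor-hom adjunction, valid for all modules. The point is now to derive it in the variable $A$ on both sides and compare.

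For part \eqref{Ext(M-N-dual)-Tor(M-N)-dual}, I would take a projective resolution $P_\bullet \to M$ of the right $R$-module $M$ by projective (say free) right $R$-modules. Applying the adjunction levelwise gives an isomorphism of cochain complexes
\[
\Hom_S(P_\bullet \otimes_R N, I) \cong \Hom_R(P_\bullet, \Hom_S(N,I)).
\]
The cohomology of the right-hand complex is $\Ext^i_R(M, \Hom_S(N,I))$ by definition. For the left-hand side, here is where the hypothesis that $I$ is an injective left $S$-module is used: the functor $\Hom_S(-,I)$ is exact, so it commutes with taking homology, and hence the cohomology of $\Hom_S(P_\bullet \otimes_R N, I)$ is $\Hom_S$ of the homology of $P_\bullet \otimes_R N$, which is $\Hom_S(\Tor_i^R(M,N), I)$. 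This yields the first displayed isomorphism. The ``in particular'' statement is the case $S = \mathbb{Z}$, $I = \mathbb{Q}/\mathbb{Z}$ (which is injective over $\mathbb{Z}$), $i = 1$, using the definition $N^+ = \Hom_{\mathbb{Z}}(N, \mathbb{Q}/\mathbb{Z})$.

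For part \eqref{Tor(F-N-dual)-Ext(F-N)-dual} the roles of Ext and Tor are swapped, and this is where the finiteness hypothesis $F \in \FP{n+1}$ enters. The idea: choose a resolution $F_\bullet \to F$ of $F$ by \emph{finitely generated free} right $R$-modules that is exact at least through degree $n+1$ (possible precisely because $F \in \FP{n+1}$, by Schanuel's Lemma as recorded in Lemma \ref{Schanuel}). For a finitely generated free module $F_j$, there is a natural isomorphism $F_j \otimes_R \Hom_S(N, M) \cong \Hom_S(\Hom_R(F_j, N), M)$ — this is the standard ``finitely generated projective'' swap, which fails for arbitrary modules and is the reason the hypothesis cannot be dropped. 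Applying this degreewise to $F_\bullet$ gives an isomorphism of complexes between $F_\bullet \otimes_R \Hom_S(N,M)$ and $\Hom_S(\Hom_R(F_\bullet, N), M)$ in degrees $\le n+1$. Taking homology: the left side computes $\Tor_i^R(F, \Hom_S(N,M))$ for $i \le n$; on the right side, since $M$ is injective over $S$, the exact functor $\Hom_S(-,M)$ commutes with homology, giving $\Hom_S(\Ext^i_R(F,N), M)$ for $i \le n$. The ``in particular'' case is again $S = \mathbb{Z}$, $M = \mathbb{Q}/\mathbb{Z}$, $i = 1$, which only needs $F \in \FP{2}$.

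The main obstacle is purely bookkeeping: making sure the finitely-generated-free swap isomorphism $F_j \otimes_R \Hom_S(N,M) \cong \Hom_S(\Hom_R(F_j,N),M)$ is natural in $j$ so that it assembles into a chain map, and tracking the degree range in which $F_\bullet$ is exact (one needs $n+2$ terms $F_0, \dots, F_{n+1}$ to compute $\Tor_i$ and $\Ext^i$ for $i \le n$ correctly, which is exactly what a finite $(n+1)$-presentation supplies after extending via Lemma \ref{Schanuel}). No genuinely hard step is involved once the adjunction and the exactness of $\Hom_S(-,I)$ for injective $I$ are in place; the content is entirely in the choice of resolution dictated by the finiteness condition on $F$.
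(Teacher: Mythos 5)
Your proof is correct and is the standard argument for these isomorphisms; the paper gives no proof of its own, deferring to \cite[Lemma 1.2.11]{Gobel}, and your route via the levelwise tensor--hom adjunction (resp.\ the finitely-generated-free swap isomorphism) together with exactness of $\Hom_S(-,I)$ for injective $I$ to commute it past homology is exactly that standard proof. One minor point: the exact truncated resolution $F_{n+1}\to\cdots\to F_0\to F\to 0$ by finitely generated free modules is precisely what the definition of $F\in\FP{n+1}$ supplies, so the appeal to Schanuel's Lemma (Lemma~\ref{Schanuel}) in part~(2) is unnecessary.
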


The duality results between $\FPinj{n}$ and $\FPflat{n}$ are as follows.

\begin{proposition} \label{FPnflat-iff-FPninj^+} 
Let $M \in \Rmod$ and $n>1$. Then, $M \in \FPflat{n}$ if and only if $M^+  \in \FPinj{n}$. 
\end{proposition}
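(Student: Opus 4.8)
The plan is to prove the equivalence by passing through the character module identity in part~\eqref{Ext(M-N-dual)-Tor(M-N)-dual} of Theorem~\ref{Tor-Ext-relations}, which gives, for any right $R$-module $F$ and any left $R$-module $M$, a natural isomorphism $\Ext^1_R(F, M^+) \cong \Tor_1^R(F, M)^+$ of abelian groups. The key observation is that the vanishing of an abelian group $A$ is equivalent to the vanishing of its character group $A^+$, since $\mathbb{Q}/\mathbb{Z}$ is an injective cogenerator; this is precisely the property of $\mathbb{Q}/\mathbb{Z}$ recalled just before Theorem~\ref{Tor-Ext-relations}.

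First I would fix $n>1$ and suppose $M \in \FPflat{n}$, i.e.\ $\Tor_1^R(F,M) = 0$ for every right $R$-module $F \in \FP{n}$. For each such $F$, the isomorphism $\Ext^1_R(F, M^+) \cong \Tor_1^R(F,M)^+$ then forces $\Ext^1_R(F, M^+) = 0$. Since $F$ ranges over all of $\FP{n}$ (on the right-hand side), this says exactly that $M^+ \in \FPinj{n}$, where here $\FPinj{n}$ is understood as the class of right $R$-modules orthogonal to right $R$-modules in $\FP{n}$ under $\Ext^1$. Conversely, if $M^+ \in \FPinj{n}$, then $\Ext^1_R(F, M^+) = 0$ for every $F \in \FP{n}$, so $\Tor_1^R(F,M)^+ = 0$, and by the cogenerator property $\Tor_1^R(F,M) = 0$; hence $M \in \FPflat{n}$. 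This gives both implications directly from the one identity, with no induction or diagram chase needed.

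The main thing to be careful about — and the only real subtlety — is the bookkeeping of sides: $M$ is a left $R$-module, $M^+$ is a right $R$-module, and the test objects $F$ in $\FP{n}$ for $\Tor_1^R(F,M)$ and for $\Ext^1_R(F,M^+)$ are right $R$-modules, so the relevant instance of Theorem~\ref{Tor-Ext-relations}\eqref{Ext(M-N-dual)-Tor(M-N)-dual} is the one with $S = \mathbb{Z}$, $I = \mathbb{Q}/\mathbb{Z}$, and $N = M$ viewed as a $(\mathbb{Z},R)$-bimodule, giving $M^+ = \Hom_{\mathbb{Z}}(M, \mathbb{Q}/\mathbb{Z})$. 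One should also note that the class $\FP{n}$ over the right-hand side is the one being quantified over in both Definition~\ref{FPn-flat} and Definition~\ref{FPn-injective} once we are consistently working with right modules as test objects; there is no mismatch. Beyond this, the argument is a two-line application of the stated Ext--Tor relation plus the cogenerator property, so I expect no genuine obstacle — the ``hard part'' is purely making sure the left/right conventions in the statement of Proposition~\ref{FPnflat-iff-FPninj^+} and in the ambient definitions line up, which they do.

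\begin{proof}[Proof sketch]
Let $n > 1$. For every right $R$-module $F$, Theorem~\ref{Tor-Ext-relations}\eqref{Ext(M-N-dual)-Tor(M-N)-dual} (applied with $S = \mathbb{Z}$ and $I = \mathbb{Q}/\mathbb{Z}$) yields a natural isomorphism
\[
\Ext^1_R(F, M^+) \cong \Tor_1^R(F, M)^+ .
\]
If $M \in \FPflat{n}$ then $\Tor_1^R(F,M) = 0$ for all $F \in \FP{n}$, so the right-hand side vanishes, hence $\Ext^1_R(F, M^+) = 0$ for all $F \in \FP{n}$, i.e.\ $M^+ \in \FPinj{n}$. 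Conversely, if $M^+ \in \FPinj{n}$ then $\Tor_1^R(F,M)^+ = 0$ for all $F \in \FP{n}$; since $\mathbb{Q}/\mathbb{Z}$ is an injective cogenerator, a module with vanishing character module is zero, so $\Tor_1^R(F,M) = 0$ for all $F \in \FP{n}$, i.e.\ $M \in \FPflat{n}$.
\end{proof}
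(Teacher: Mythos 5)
Your proof is correct and follows exactly the same route as the paper: apply part~\eqref{Ext(M-N-dual)-Tor(M-N)-dual} of Theorem~\ref{Tor-Ext-relations} to get $\Ext^1_R(F,M^+) \cong \Tor_1^R(F,M)^+$ and then use that $\mathbb{Q}/\mathbb{Z}$ is an injective cogenerator. The extra care you take with the left/right module bookkeeping is a nice touch but does not change the substance of the argument.
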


\begin{proof} From part \eqref{Ext(M-N-dual)-Tor(M-N)-dual} of Theorem \ref{Tor-Ext-relations}, we have $\Tor^R_1(F,M)^+  \cong \Ext^1_R(F,M^+)$ for any right $R$-module $F$. In particular, for $F \in \FP{n}$.

If $M \in \FPflat{n}$, then $\Tor^R_1(F,M) = 0$, and so $\Ext^1_R(F,M^+) = 0$. Hence $M^+ \in \FPinj{n}$.  

If $M^+ \in \FPinj{n}$, then $\Ext^1_R(F,M^+) = 0$, and hence $\Tor^R_1(F,M)^+ = 0$. This implies that $\Tor^R_1(F,M) = 0$,  and so $M \in \FPflat{n}$.
\end{proof}

\begin{proposition}\label{FPninj-iff-FPflat^+} 
Let $M \in \modR$ and $n>1$. Then, $M^+ \in \FPflat{n}$ if and only if $M  \in \FPinj{n}$.
\end{proposition}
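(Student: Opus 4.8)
The plan is to mirror the proof of Proposition~\ref{FPnflat-iff-FPninj^+}, but using part \eqref{Tor(F-N-dual)-Ext(F-N)-dual} of Theorem \ref{Tor-Ext-relations} in place of part \eqref{Ext(M-N-dual)-Tor(M-N)-dual}. Concretely, for a right $R$-module $M$ and any $F \in \FP{n+1}$ we have the natural isomorphism $\Tor^R_1(F, M^+) \cong \Ext^1_R(F, M)^+$. The temptation is to apply this with $F$ ranging over $\FP{n}$, but the hypothesis of Theorem~\ref{Tor-Ext-relations}\eqref{Tor(F-N-dual)-Ext(F-N)-dual} requires $F \in \FP{n+1}$, not merely $F \in \FP{n}$, so the first thing I would do is address this gap.

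Here is how I would close that gap. To test whether $M \in \FPinj{n}$ we must show $\Ext^1_R(F,M) = 0$ for all $F \in \FP{n}$. Given such an $F$, pick a finite $n$-presentation and let $K$ be its $n$-th syzygy, so that there is a short exact sequence $\ses{K}{P}{F'}$ with $P$ finitely generated free and $F'$ an $(n-1)$-syzygy; iterating, or more directly, one observes that every $F \in \FP{n}$ is a cokernel-of-monomorphism-type quotient whose relevant syzygies are finitely generated, and in particular $\Ext^1_R(F,M)$ can be computed via dimension shifting from $\Ext^1_R(-,M)$ applied to modules in $\FP{\infty}$ — but that requires $n$-coherence, which we do not have. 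A cleaner route: since $n > 1$, every $F \in \FP{n}$ sits in a short exact sequence $\ses{F_1}{F_0}{F}$ with $F_0$ finitely generated free and $F_1 \in \FP{n-1}$. Then $\Ext^1_R(F,M)$ is a quotient of $\Hom_R(F_1,M)$, hmm, that still does not obviously reduce to $\FP{n+1}$. The correct observation is actually that Theorem~\ref{Tor-Ext-relations}\eqref{Tor(F-N-dual)-Ext(F-N)-dual} as stated gives, for $F \in \FP{n+1}$, the isomorphism $\Tor^R_1(F,M^+)\cong\Ext^1_R(F,M)^+$; but the "$i=1$" case in fact only needs $F\in\FP{2}$ (one free module in the presentation past the one computing $\Tor_1$, i.e. $F_1$ finitely generated, plus finite presentation), and since $n>1$ we have $\FP{n}\subseteq\FP{2}$. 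So for every $F\in\FP{n}$ the isomorphism $\Tor^R_1(F,M^+)\cong\Ext^1_R(F,M)^+$ is valid, and this is the key step.

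With that isomorphism in hand the argument is a routine dualization, exactly parallel to the previous proposition. If $M \in \FPinj{n}$, then $\Ext^1_R(F,M) = 0$ for all $F \in \FP{n}$, hence $\Ext^1_R(F,M)^+ = 0$, hence $\Tor^R_1(F,M^+) = 0$ for all such $F$, which says $M^+ \in \FPflat{n}$. Conversely, if $M^+ \in \FPflat{n}$, then $\Tor^R_1(F,M^+) = 0$ for all $F \in \FP{n}$, so $\Ext^1_R(F,M)^+ = 0$; since $\mathbb{Q/Z}$ is an injective cogenerator, $N^+ = 0$ forces $N = 0$, so $\Ext^1_R(F,M) = 0$ for all $F \in \FP{n}$, i.e. $M \in \FPinj{n}$.

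The main obstacle, and the only genuinely delicate point, is the bookkeeping around which $\FP{k}$ hypothesis is actually needed for the $i=1$ instance of Theorem~\ref{Tor-Ext-relations}\eqref{Tor(F-N-dual)-Ext(F-N)-dual}: one must be sure that $F \in \FP{n}$ with $n > 1$ suffices (it does, since the "in particular" clause of that theorem is the $i=1$ case and requires only finite presentation of $F$ together with enough freeness to compute $\Tor_1$ and $\Ext^1$, which $\FP{2}\supseteq\FP{n}$ provides). Everything after that is the cogenerator trick and is immediate. It is worth remarking that, just as in Proposition~\ref{FPnflat-iff-FPninj^+}, the restriction $n > 1$ is exactly what makes the relevant Ext–Tor duality applicable.
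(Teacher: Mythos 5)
Your proof is correct and takes essentially the same approach as the paper's, which applies the dual Ext--Tor isomorphism from Theorem~\ref{Tor-Ext-relations}\eqref{Tor(F-N-dual)-Ext(F-N)-dual} to modules $F \in \FP{n}$ and then invokes the fact that $\mathbb{Q}/\mathbb{Z}$ is an injective cogenerator. In fact you are more careful than the paper on the key point: the paper asserts the isomorphism $\Tor_1^R(F,M^+) \cong \Ext^1_R(F,M)^+$ ``for any $R$-module $F$,'' whereas you correctly identify that the $i=1$ instance of the cited theorem genuinely requires $F \in \FP{2}$, and that this is precisely what the hypothesis $n>1$ (hence $\FP{n} \subseteq \FP{2}$) supplies.
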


\begin{proof} From part \eqref{Tor(F-N-dual)-Ext(F-N)-dual} of Theorem \ref{Tor-Ext-relations}, we have $\Ext^1_R(F,M)^+  \cong \Tor_1^R(F, M^+)$ for any $R$-module $F$. In particular, for $F \in \FP{n}$.

If $M \in \FPinj{n}$, then $\Ext^1_R(F,M) = 0$, and so $\Tor^R_1(F,M^+) = 0$. Hence $M^+ \in \FPflat{n}$.

If $M^+ \in \FPflat{n}$, then $\Tor^R_1(F,M^+) = 0$, and so $\Ext^1_R(F,M)^+ = 0$.  This implies that  $\Ext^1_R(F,M)=0$, and so $M \in \FPinj{n}$.
\end{proof}

Note that the version for $\FPinj{\infty}$ y  $\FPflat{\infty}$ are already done in  \cite[Theorem 2.13]{BGH}, in the language of absolute clean and level modules.

An immediate corollary of this proposition is an application to duality pairs \cite{BGH}. Given a ring $R$, and two classes of module $\A$ and $\B$, then we say that $(\A,\B)$ is a \textbf{duality pair} if and only if $\A^+ \subseteq \B$ and $\A \supseteq \B^+$; that is, $M \in \A$ if and only if $M^+ \in \B$, and $N \in \B$ if and only if $N^+ \in \A$.

\begin{corollary} \label{duality-pair}
Let $n>1$. The pair $(\FPflat{n},\FPinj{n})$ is a duality pair.
\end{corollary}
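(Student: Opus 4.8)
The plan is to read off the corollary directly from the two dualities established immediately before it, namely Proposition~\ref{FPnflat-iff-FPninj^+} and Proposition~\ref{FPninj-iff-FPflat^+}, together with the definition of a duality pair. Concretely, to show that $(\FPflat{n},\FPinj{n})$ is a duality pair I must verify two conditions: first, that for a left $R$-module $M$ one has $M \in \FPflat{n}$ if and only if $M^+ \in \FPinj{n}$; second, that for a right $R$-module $N$ one has $N \in \FPinj{n}$ if and only if $N^+ \in \FPflat{n}$. The first is precisely the content of Proposition~\ref{FPnflat-iff-FPninj^+}, and the second is precisely the content of Proposition~\ref{FPninj-iff-FPflat^+}.

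First I would recall the definition of duality pair $(\A,\B)$ given just above the statement: it requires $M \in \A \iff M^+ \in \B$ and $N \in \B \iff N^+ \in \A$. Here I take $\A = \FPflat{n}$ (a class of left modules, since $\FPflat{n}$ was defined for left $R$-modules) and $\B = \FPinj{n}$ (also a class of left modules by Definition~\ref{FPn-injective}), but one must be slightly careful about sides: the character module of a left module is a right module, so in the condition ``$M^+ \in \B$'' the class $\B$ must be understood as the corresponding class of right modules, and vice versa. This is the usual bimodule/side convention under which Propositions~\ref{FPnflat-iff-FPninj^+} and~\ref{FPninj-iff-FPflat^+} were stated (one is phrased for $M \in \Rmod$, the other for $M \in \modR$), so the two propositions supply exactly the two biconditionals needed, read in the appropriate sides.

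Then the proof is a two-line invocation: the biconditional $M \in \FPflat{n} \iff M^+ \in \FPinj{n}$ is Proposition~\ref{FPnflat-iff-FPninj^+}, giving $\A^+ \subseteq \B$ and the reverse implication; and $N \in \FPinj{n} \iff N^+ \in \FPflat{n}$ is Proposition~\ref{FPninj-iff-FPflat^+}, giving $\B^+ \subseteq \A$ and its reverse. Matching these against the definition of duality pair completes the argument, and the hypothesis $n>1$ is carried along simply because both propositions assume it.

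The main obstacle here is not mathematical depth but bookkeeping of sides: one must make sure that the ``$\A$'' appearing in ``$N^+ \in \A$'' is the class of right-module analogues of $\FPflat{n}$, and confirm that Propositions~\ref{FPnflat-iff-FPninj^+} and~\ref{FPninj-iff-FPflat^+} together genuinely cover both directions for both classes (they do, since each is itself an ``if and only if''). Aside from that, the corollary is an immediate formal consequence, so the proof will be short; I would write essentially: ``By Proposition~\ref{FPnflat-iff-FPninj^+}, $M \in \FPflat{n}$ iff $M^+ \in \FPinj{n}$, and by Proposition~\ref{FPninj-iff-FPflat^+}, $N \in \FPinj{n}$ iff $N^+ \in \FPflat{n}$; these are exactly the defining conditions of a duality pair.''
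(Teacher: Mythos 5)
Your proposal is correct and is exactly the paper's (implicit) argument: the paper labels this a corollary of the two preceding propositions and supplies no further proof, because invoking Proposition~\ref{FPnflat-iff-FPninj^+} for the biconditional $M \in \FPflat{n} \iff M^+ \in \FPinj{n}$ and Proposition~\ref{FPninj-iff-FPflat^+} for $N \in \FPinj{n} \iff N^+ \in \FPflat{n}$ immediately yields the defining conditions of a duality pair. Your side-bookkeeping remark (that $\FPflat{n}$ is a class of left modules and $\FPinj{n}$ must be read as its right-module analogue, matching the $\Rmod$/$\modR$ hypotheses of the two propositions) is an accurate clarification of what the paper leaves implicit.
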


Recall that  $M^{++}$  denotes the double dual of a module $M$. We also have the following result.

\begin{corollary}\label{FPninj-and-FPnflat-double-dual}
Let $n > 1$.
\begin{enumerate}
\item $M \in \FPinj{n}$ if and only if $M^{++} \in \FPinj{n}$.
\item $M \in \FPflat{n}$ if and only if $M^{++} \in \FPflat{n}$.
\end{enumerate}
\end{corollary}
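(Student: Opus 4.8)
The plan is to deduce Corollary~\ref{FPninj-and-FPnflat-double-dual} directly from the two duality results established just above, namely Proposition~\ref{FPnflat-iff-FPninj^+} and Proposition~\ref{FPninj-iff-FPflat^+}, by applying each of them twice. The point is that $(-)^{++} = ((-)^+)^+$, so a statement about the double dual is just a concatenation of two single-dual statements, one living on the left-module side and one on the right-module side.

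For part (1), I would argue as follows. Start with $M \in \Rmod$. By Proposition~\ref{FPninj-iff-FPflat^+}, $M \in \FPinj{n}$ if and only if $M^+ \in \FPflat{n}$ (here $M^+ \in \modR$). Now apply Proposition~\ref{FPnflat-iff-FPninj^+} to the right module $M^+$: it says $M^+ \in \FPflat{n}$ if and only if $(M^+)^+ = M^{++} \in \FPinj{n}$. Chaining the two equivalences gives $M \in \FPinj{n} \iff M^{++} \in \FPinj{n}$, which is exactly (1). For part (2), start with $M \in \Rmod$ and apply the propositions in the other order: by Proposition~\ref{FPnflat-iff-FPninj^+}, $M \in \FPflat{n} \iff M^+ \in \FPinj{n}$; then by Proposition~\ref{FPninj-iff-FPflat^+} applied to the right module $M^+$, $M^+ \in \FPinj{n} \iff M^{++} \in \FPflat{n}$. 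Concatenating yields (2). One should double-check the left/right bookkeeping: $M$ is a left module, $M^+$ is a right module, $M^{++}$ is again a left module, so the hypotheses "$M \in \Rmod$" in Proposition~\ref{FPninj-iff-FPflat^+} / Proposition~\ref{FPnflat-iff-FPninj^+} are applied once on the left side and once on the right side, and everything matches up.

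There is essentially no hard step here — the whole content is in the already-proven duality equivalences, and this corollary is a formal two-fold iteration. The only thing to be mildly careful about is the hypothesis $n > 1$, which is inherited from both propositions and hence appears in the corollary's statement; and the implicit fact that the Ext--Tor relations used to prove those propositions apply symmetrically to left and right modules, which is already built into Theorem~\ref{Tor-Ext-relations}. So I would simply write: "Apply Proposition~\ref{FPninj-iff-FPflat^+} and then Proposition~\ref{FPnflat-iff-FPninj^+} (for part (1)), resp.\ Proposition~\ref{FPnflat-iff-FPninj^+} and then Proposition~\ref{FPninj-iff-FPflat^+} (for part (2))." If one wanted an even more self-contained proof avoiding the left/right distinction, one could instead combine the isomorphisms $\Ext^1_R(F,M)^+ \cong \Tor_1^R(F,M^+)$ and $\Tor_1^R(F,M)^+ \cong \Ext^1_R(F,M^+)$ from Theorem~\ref{Tor-Ext-relations} to get $\Ext^1_R(F,M^{++}) \cong \Tor_1^R(F,M^+)^+ \cong \Ext^1_R(F,M)^{++}$, and then use that $N^{++} = 0 \iff N = 0$ (since $\mathbb{Q}/\mathbb{Z}$ is an injective cogenerator and $N$ embeds in $N^{++}$); the analogous computation handles the Tor side. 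I expect the cleanest writeup to be the first one, citing the two propositions directly.
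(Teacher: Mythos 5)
Your argument is correct and is precisely the intended one: the paper states Corollary~\ref{FPninj-and-FPnflat-double-dual} immediately after Propositions~\ref{FPnflat-iff-FPninj^+} and~\ref{FPninj-iff-FPflat^+} with no proof written, and the expected justification is exactly the two-fold iteration of those dualities that you describe. Your observation about left/right bookkeeping is the only point worth making explicit in a writeup: with the propositions taken literally as stated, the chain for part~(1) starts from $M \in \modR$ (apply Proposition~\ref{FPninj-iff-FPflat^+}, then Proposition~\ref{FPnflat-iff-FPninj^+} to $M^+ \in \Rmod$) while the chain for part~(2) starts from $M \in \Rmod$, and the remaining sided variants follow from the same Ext--Tor symmetry. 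Your alternative self-contained route, combining the two isomorphisms of Theorem~\ref{Tor-Ext-relations} to get $\Ext^1_R(F,M^{++}) \cong \Ext^1_R(F,M)^{++}$ and using that $N^{++} = 0$ iff $N = 0$, is also valid and has the minor advantage of avoiding the sided case-checking entirely, at the cost of unfolding what the propositions already package.
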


In the next results, we will need the following closure properties about direct summands.

\begin{proposition} \label{closed-under-direct-summands} 
The classes $\FPinj{n}$ and $\FPflat{n}$ are closed under direct summands.
\end{proposition}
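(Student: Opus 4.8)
The plan is to reduce each assertion to the defining vanishing conditions — $\Ext^1_R(F,-)=0$ for all $F\in\FP{n}$ in the injective case, and $\Tor_1^R(F,-)=0$ for all $F\in\FP{n}$ in the flat case — and to exploit the additivity of these functors in their second argument. The key observation is that both classes are defined as $\{M : \Phi(F,M)=0 \text{ for all } F\in\FP{n}\}$ for an additive bifunctor $\Phi$, and such classes are automatically closed under any construction that the functor $\Phi(F,-)$ respects on objects it sends to zero.

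First I would treat $\FPinj{n}$. Suppose $M\cong A\oplus B$ with $M\in\FPinj{n}$, and fix $F\in\FP{n}$. Since $\Ext^1_R(F,-)$ is an additive functor, $\Ext^1_R(F,M)\cong\Ext^1_R(F,A)\oplus\Ext^1_R(F,B)$. As the left-hand side is $0$, both direct summands on the right vanish, so $\Ext^1_R(F,A)=0$ and $\Ext^1_R(F,B)=0$. Since $F\in\FP{n}$ was arbitrary, $A,B\in\FPinj{n}$. (Alternatively, one could invoke the long exact sequence in $\Ext$ associated to the split short exact sequence $\ses{A}{M}{B}$, but the additivity argument is cleaner.) The case $n=\infty$ requires no change, since $\Ext^1_R(F,-)$ is still additive for $F\in\FP{\infty}$.

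The argument for $\FPflat{n}$ is entirely parallel: $\Tor_1^R(F,-)$ is additive, so from $M\cong A\oplus B$ one gets $0=\Tor_1^R(F,M)\cong\Tor_1^R(F,A)\oplus\Tor_1^R(F,B)$, forcing both summands to be zero for every $F\in\FP{n}$; hence $A,B\in\FPflat{n}$. As an aside, for $n>1$ one could also deduce the flat case from the injective case already proved, using Proposition~\ref{FPnflat-iff-FPninj^+}: if $M=A\oplus B\in\FPflat{n}$ then $M^+=A^+\oplus B^+\in\FPinj{n}$, so $A^+,B^+\in\FPinj{n}$ by the first part, whence $A,B\in\FPflat{n}$; but the direct additivity argument is shorter and covers all $n$ uniformly.

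There is essentially no obstacle here — the only thing to be careful about is to phrase the additivity of $\Ext^1_R(F,-)$ and $\Tor_1^R(F,-)$ on finite direct sums correctly (these are classical facts), and to note that the same reasoning applies verbatim when $n=\infty$. The proof is a couple of lines.
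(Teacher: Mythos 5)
Your proof is correct and takes essentially the same route as the paper: both reduce to the additivity of $\Ext^1_R(F,-)$ (respectively $\Tor_1^R(F,-)$) on a direct sum decomposition $M\cong A\oplus B$, forcing each summand of $\Ext^1_R(F,M)$ (resp.\ $\Tor_1^R(F,M)$) to vanish.
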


\begin{proof} 
We only prove that $\FPinj{n}$ is closed under direct summands. The arguments will carry over to $\FPflat{n}$. 

Let $M \in \FPinj{n}$ and $N$ be a direct summand of $M$. Then $M = N \oplus N'$ for some $N \in \Rmod$. So for every $F \in \FP{n}$, we have $0 = \Ext^1_R(F,M) \cong \Ext^1_R(F,N) \oplus \Ext^1_R(F,N')$. Hence we obtain $\Ext^1_R(F,N) = 0$ for every $F \in \FP{n}$, i.e. $N \in \FPinj{n}$. 
\end{proof}

The following two propositions summarize some properties of the classes $\FPinj{n}$ and $\FPflat{n}$. 

\begin{proposition}[Properties of $\FPinj{n}$] \label{properties-FPninj} 
Let $n>1$, then the following conditions hold:
\begin{enumerate}
\item $\FPinj{n}$ is closed under extensions. \label{FPninj-closed-extensions}

\item $\FPinj{n}$ is closed under direct products. \label{FPninj-closed-products}

\item $\FPinj{n}$ is closed direct limits. \label{FPninj-closed-direct-limits}

\item $\FPinj{n}$ is closed under pure submodules and under pure quotients. \label{FPninj-closed-pure}

\end{enumerate}

\end{proposition}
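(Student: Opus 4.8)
The plan is to establish each of the four closure properties of $\FPinj{n}$ in turn, relying on the long exact sequence in $\Ext$ and the behavior of $\Ext^1_R(F,-)$ when $F$ is finitely $n$-presented with $n>1$. The crucial input from Theorem \ref{FPn_Ext_Tor} is that for $F\in\FP{n}$ with $n\ge 2$, the functors $\Ext^i_R(F,-)$ commute with direct limits for $0\le i\le n-1$; in particular $\Ext^1_R(F,-)$ commutes with direct limits. This single fact drives parts \eqref{FPninj-closed-direct-limits} and \eqref{FPninj-closed-pure}.

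First, for part \eqref{FPninj-closed-extensions}, I would take a short exact sequence $\ses{M'}{M}{M''}$ with $M',M''\in\FPinj{n}$ and, for each $F\in\FP{n}$, apply $\Hom_R(F,-)$ to obtain the exact piece $\Ext^1_R(F,M')\to\Ext^1_R(F,M)\to\Ext^1_R(F,M'')$. The outer terms vanish, so $\Ext^1_R(F,M)=0$, giving $M\in\FPinj{n}$. For part \eqref{FPninj-closed-products}, I would use that $\Ext^1_R(F,-)$ commutes with direct products in the second variable (this holds for any module $F$, since $\Hom_R(F,-)$ does and one can compute $\Ext$ via a projective resolution of $F$, each term of which is again carried to a product); hence $\Ext^1_R(F,\prod_i M_i)\cong\prod_i\Ext^1_R(F,M_i)=0$ when each $M_i\in\FPinj{n}$. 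For part \eqref{FPninj-closed-direct-limits}, given a direct system $\{M_i\}$ in $\FPinj{n}$, I would invoke Theorem \ref{FPn_Ext_Tor}\,(2) to get $\Ext^1_R(F,\varinjlim M_i)\cong\varinjlim\Ext^1_R(F,M_i)=\varinjlim 0=0$ for every $F\in\FP{n}$, using $n>1$ so that $i=1\le n-1$; thus $\varinjlim M_i\in\FPinj{n}$.

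For part \eqref{FPninj-closed-pure}, let $\ses{A}{M}{M/A}$ be a pure exact sequence with $M\in\FPinj{n}$; I must show $A\in\FPinj{n}$ and $M/A\in\FPinj{n}$. The standard approach is to use that a pure exact sequence is a direct limit of split exact sequences $\ses{A}{A\oplus F_j}{F_j}$ with $F_j$ finitely presented (or, dually, to pass to character modules: purity of the sequence is equivalent to $\ses{(M/A)^+}{M^+}{A^+}$ being split). Taking the latter route, $(M/A)^+$ is then a direct summand of $M^+$. Since $M\in\FPinj{n}$, Proposition \ref{FPninj-iff-FPflat^+} gives $M^+\in\FPflat{n}$, which is closed under direct summands by Proposition \ref{closed-under-direct-summands}, so $(M/A)^+\in\FPflat{n}$, and then $M/A\in\FPinj{n}$ again by Proposition \ref{FPninj-iff-FPflat^+}. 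For $A$ itself, I would combine this with the already-proved closure under extensions: from $\ses{A}{M}{M/A}$ with $M,M/A\in\FPinj{n}$ one does \emph{not} directly get $A$, so instead I apply $\Hom_R(F,-)$ and use that the connecting map $\Ext^1_R(F,M)\to\Ext^1_R(F,M/A)$ has the previous term $\Ext^1_R(F,M/A)$ preceded by $\Ext^1_R(F,M)=0$; more carefully, the exact sequence $\Ext^1_R(F,M)\to\Ext^1_R(F,M/A)\to\Ext^2_R(F,A)\to\Ext^2_R(F,M)$ together with purity (which forces the sequence $\ses{A}{M}{M/A}$ to stay exact after $\Hom_R(F,-)$ for $F$ finitely presented, hence $\Ext^1_R(F,A)$ injects appropriately) is the cleanest; alternatively one checks $A^+$ is a direct summand of $M^+$ too, since a split sequence of character modules splits on both ends, so $A^+\in\FPflat{n}$ and $A\in\FPinj{n}$.

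The main obstacle is part \eqref{FPninj-closed-pure}: one has to be careful about which half of the pure sequence the argument applies to and to correctly translate purity into a splitting of character modules (namely, $\ses{A}{M}{M/A}$ is pure exact iff $\ses{(M/A)^+}{M^+}{A^+}$ splits), after which closure of $\FPflat{n}$ under direct summands does the work for \emph{both} the pure submodule and the pure quotient; the only subtlety is making sure $A^+$, and not merely $(M/A)^+$, is exhibited as a summand of $M^+$, which follows because a split short exact sequence splits on both sides. Everything else is a routine application of the $\Ext$ long exact sequence and the commutation properties already recorded.
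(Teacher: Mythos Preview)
Your argument is correct and matches the paper's proof for parts \eqref{FPninj-closed-extensions}--\eqref{FPninj-closed-direct-limits} and for the pure-quotient half of \eqref{FPninj-closed-pure}. The only difference is in the pure-submodule half of \eqref{FPninj-closed-pure}: the paper handles $A$ directly by observing that purity makes $\Hom_R(F,M)\to\Hom_R(F,M/A)$ surjective for finitely presented $F$, whence in the long exact sequence $\Ext^1_R(F,A)$ embeds in $\Ext^1_R(F,M)=0$; you sketch this but your exposition takes an unnecessary detour through $\Ext^2$ before landing on the right statement. Your alternative---noting that the split sequence of character modules exhibits $A^+$ (not just $(M/A)^+$) as a summand of $M^+$, so $A^+\in\FPflat{n}$ and hence $A\in\FPinj{n}$---is also correct and is simply the dual-module route applied symmetrically; it is a minor variant of the paper's treatment of the quotient and buys you a uniform argument for both halves at the cost of invoking Propositions \ref{FPninj-iff-FPflat^+} and \ref{closed-under-direct-summands} twice.
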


\begin{proof} Part \eqref{FPninj-closed-extensions} follows immediately by the long exact sequence of $\Ext^1_R(F,-)$ for any $F \in \Rmod$. Since $\Ext^1_R(F,-)$ preserves limits, part \eqref{FPninj-closed-products} also follows. 

Next, to see \eqref{FPninj-closed-direct-limits},  let $F \in \FP{n}$ and $n > 1$. Then, by Theorem \ref{FPn_Ext_Tor}, we know $\Ext^1_R(F,-)$ commutes with direct limits, giving the result.

To show \eqref{FPninj-closed-pure} suppose we are given a pure exact sequence 
\begin{equation} \label{pure-ses}
0 \to N \to M \to M / N \to 0
\end{equation}
with $M \in \FPinj{n}$. Then we get the following sequence 
\begin{equation} \label{induce-ses}
0 \to \Hom_R(F,N) \to \Hom_R(F,M) \to \Hom_R(F,M / N) \to 0
\end{equation}
which is exact for every finitely presented  module $F$, and in particular for every $F \in \FP{n}$ when $n>1$. On the other hand, we have an induced exact sequence
\[
\Hom_R(F,M) \to \Hom_R(F, M / N) \to \Ext^1_R(F,N) \to \Ext^1_R(F,M).
\]
The map $\Hom_R(F,M) \to \Hom_R(F, M / N)$ is an epimorphism since (\ref{induce-ses}) is exact, and $\Ext^1_R(F,M) = 0$ since $M \in \FPinj{n}$. Therefore $\Ext^1_R(F,N) = 0$ for every $F \in \FP{n}$, i.e. $N \in \FPinj{n}$. 

To show that $M / N \in \FPinj{n}$, observe that since the sequence in \eqref{pure-ses} is exact, then by \cite[Lemma 1.2.13 (d)]{Gobel} we have a split exact sequence 
\[
0 \to (M / N)^+ \to M^+ \to N^+ \to 0 .
\] 
Then $(M / N)^+$ is a direct summand of $M^+$. On the other hand, by Proposition \ref{FPninj-iff-FPflat^+}), $M^+ \in \FPflat{n}$. Thus $(M / N)^+ \in \FPflat{n}$ since the class $\FPflat{n}$ is closed under direct summands by Proposition \ref{closed-under-direct-summands}. Then by  Proposition \ref{FPninj-iff-FPflat^+} again, we have that $M / N \in \FPinj{n}$. 
\end{proof}

\begin{proposition}[Properties of $\FPflat{n}$] \label{properties-FPnflat} 
Let $n>1$, then the following conditions hold:
\begin{enumerate}
\item $\FPflat{n}$ is closed under extensions. \label{FPnflat-closed-extension}

\item $\FPflat{n}$ is closed under direct limits. \label{FPnflat-closed-direct-limits}

\item $\FPflat{n}$ is closed under direct products. \label{FPnflat-closed-direct-products}

\item $\FPflat{n}$ is closed under pure submodules and pure quotients. \label{FPnflat-closed-pure}

\end{enumerate}

\end{proposition}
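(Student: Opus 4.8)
The plan is to run the proof of Proposition~\ref{properties-FPninj} in parallel, trading $\Ext^1_R(F,-)$ for $\Tor_1^R(F,-)$ and invoking the character-module duality of Propositions~\ref{FPnflat-iff-FPninj^+} and~\ref{FPninj-iff-FPflat^+} whenever a dualization is required. For part~\eqref{FPnflat-closed-extension}, given $\ses{A}{B}{C}$ with $A,C\in\FPflat{n}$ and any $F\in\FP{n}$, the segment $\Tor_1^R(F,A)\to\Tor_1^R(F,B)\to\Tor_1^R(F,C)$ of the long exact $\Tor$-sequence has vanishing outer terms, so $\Tor_1^R(F,B)=0$ and $B\in\FPflat{n}$.

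For part~\eqref{FPnflat-closed-direct-limits}, since $\Tor_i^R(F,-)$ commutes with direct limits for every $F$ (filtered colimits being exact and commuting with tensor products), for $M=\varinjlim M_i$ with each $M_i\in\FPflat{n}$ and $F\in\FP{n}$ one gets $\Tor_1^R(F,M)\cong\varinjlim\Tor_1^R(F,M_i)=0$. For part~\eqref{FPnflat-closed-direct-products}, the hypothesis $n>1$ is genuinely used here: it forces $\FP{n}\subseteq\FP{2}=\FP{1+1}$, so by Theorem~\ref{FPn_Ext_Tor} every $F\in\FP{n}$ has $\Tor_1^R(F,-)$ commuting with direct products, whence $\Tor_1^R\!\bigl(F,\prod_j M_j\bigr)\cong\prod_j\Tor_1^R(F,M_j)=0$ whenever every $M_j\in\FPflat{n}$.

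For part~\eqref{FPnflat-closed-pure} I would argue through character modules. Let $\ses{N}{M}{M/N}$ be pure exact with $M\in\FPflat{n}$. By \cite[Lemma~1.2.13~(d)]{Gobel} the character dual $\ses{(M/N)^+}{M^+}{N^+}$ is split exact, so $(M/N)^+$ and $N^+$ are direct summands of $M^+$; by Proposition~\ref{FPnflat-iff-FPninj^+} we have $M^+\in\FPinj{n}$, and $\FPinj{n}$ is closed under direct summands by Proposition~\ref{closed-under-direct-summands}, so $(M/N)^+,N^+\in\FPinj{n}$, and a final application of Proposition~\ref{FPnflat-iff-FPninj^+} yields $M/N,N\in\FPflat{n}$. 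I expect the pure submodule $N$ to be the only delicate point: a naive diagram chase in the $\Tor$-long exact sequence only bounds $\Tor_1^R(F,N)$ by a subquotient of $\Tor_2^R(F,M/N)$, which need not vanish, so the passage through the split character dual is unavoidable there; by contrast the pure quotient could be obtained directly, since purity makes $F\otimes_R N\to F\otimes_R M$ injective and hence $\Tor_1^R(F,M)\twoheadrightarrow\Tor_1^R(F,M/N)$.
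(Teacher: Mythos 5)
Your proof follows the paper's own argument step for step: the long exact $\Tor$ sequence for extensions, commutation of $\Tor_1^R(F,-)$ with colimits for direct limits, Theorem~\ref{FPn_Ext_Tor} (where $n>1$ is needed) for products, and the split character-dual sequence together with Propositions~\ref{FPnflat-iff-FPninj^+} and~\ref{closed-under-direct-summands} for pure submodules and quotients. The closing remark about the pure quotient being obtainable directly from purity and the surjection $\Tor_1^R(F,M)\twoheadrightarrow\Tor_1^R(F,M/N)$ is a correct observation, but it is not a deviation from the paper's proof, which you reproduce faithfully.
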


\begin{proof} 
Part \eqref{FPnflat-closed-extension} follows directly from the long exact sequence of $\Tor_1^R(F,-)$. Part \eqref{FPnflat-closed-direct-limits}  also follows since $\Tor_1^R(F,-)$ preserves colimits.

To show \eqref{FPnflat-closed-direct-products}, use Theorem \ref{FPn_Ext_Tor} with $F \in \FP{n}$ and $n > 1$. Then, $\Tor_1^R(F,-)$ will commute with direct products, and the result follows. 

Finally, let $M \in \FPflat{n}$ and suppose we have pure short exact sequence 
\[
\ses{N}{M}{M / N}.
\] 
Then the following short exact sequence splits
\[
\ses{(M / N)^+}{M^+}{N^+}.
\] 
Since $M^+ \in \FPinj{n}$ (by Proposition \ref{FPnflat-iff-FPninj^+}), and $\FPinj{n}$ is closed under summands (by Proposition \ref{closed-under-direct-summands}), both $N^+$ and $(M/N)^+$ are in $\FPinj{n}$. Then by the same Proposition \ref{FPnflat-iff-FPninj^+} again, the result follows.
\end{proof}


\section{Cotorsion pairs associated to $\text{FP}_n$-injective and $\text{FP}_n$-flat modules} \label{Cotorsion pairs associated to FPninj and FPnflat}

We now connect with the notion of cotorsion pairs and study the associated pairs to the classes $\FPinj{n}$ and $\FPflat{n}$ with, and without, conditions on the ring. 

For every class $\C$ of $R$-modules, denote the classes
\begin{align*}
\C^{\perp} & := \{ X \in \Rmod \mbox{ : } \Ext^1_R(C, X) = 0, \text{ for all $C \in \C$} \}, \mbox{ and} \\
{}^{\perp}\C & := \{ X \in \Rmod \mbox{ : } \Ext^1_R(X,C) = 0, \text{ for all $C \in \C$} \},
\end{align*} 
which are referred to as the \textbf{right} and \textbf{left orthogonal complements} of $\C$.

Given two classes of $R$-modules $\A$ and $\B$, then we say that $(\A,\B)$ is a \textbf{cotorsion pair} if $\A^{\perp} = \B$ and $\A = {}^{\perp}\B$. Two straightforward examples of cotorsion pairs are the pairs $(\Proj,\Rmod)$ and $(\Rmod, \Inj)$, where $\Proj$ denotes the class of projective $R$-modules, and $\Inj$ denotes the class of injective $R$-modules. A less trivial example is the pair $(\Flat,\text{Cotorsion})$, where $\Flat$ denotes the class of flat $R$-modules, and Cotorsion, is its right orthogonal complement, the class of the \textit{cotorsion modules} (see \cite[Proposition 7.4.3]{EJ}).

A cotorsion pair $(\A,\B)$ in $\Rmod$ is said to be \textbf{complete} if for any $X \in \Rmod$ we can always find short exact sequences 
\[
\ses{B}{A}{X} \mbox{ \ and \ } \ses{X}{B'}{A'}
\]
with $A,A' \in \A$ and $B, B' \in \B$. Furthermore, a cotorsion pair is called \textbf{hereditary} if $\Ext^i_R(A,B) = 0$ for all $i>0$, $A \in \A$ and $B \in \B$. Knowing that the cotorsion pair $(\A,\B)$ is hereditary, is equivalent to knowing that $\A$ is  \textbf{resolving} (that is, $\Proj \subseteq \A$, and $\A$ is closed under extensions and kernels of epimorphisms), which is also equivalent to knowing that $\B$ is \textbf{coresolving} (that is, $\Inj \subseteq \B$, and $\B$ is closed under extensions and cokernels of monomorphisms). 

Since from now on, our focus is on the cases $n>1$, we recall some cotorsion pairs in the first two cases. For $n = 0$,  the class $\FPinj{0}$ coincides with the class $\Inj$ of injective modules, and we know that $(\Rmod,\Inj)$ is a complete cotorsion pair for every ring $R$. On the other hand, it is known that $(\Inj,\Rmod)$ is a perfect cotorsion pair if, an only if, $R$ is a self injective Noetherian ring (see \cite[Theorem 5.4.1]{EJ} and \cite[Theorem 4.1.13]{Gobel}). In the $FP_0$-flat version, we have that $\Flat = \FPflat{0}$, and the cotorsion pair $(\FPflat{0},(\FPflat{0})^{\perp})$ indicated above is complete.

The injective version for the case $n = 1$ can be found in several sources. For every ring $R$, we have the complete cotorsion pair $(\text{}^\perp(\FPinj{1}), \FPinj{1})$ \cite[Theorem 4.1.6]{Gobel}. More properties of this pair were studied by L. Mao and N. Ding in \cite{MaoDing2005} and \cite{MaoDing2007}, where they establish that, if $R$ is a left coherent ring, then $(\text{}^\perp(\FPinj{1}), \FPinj{1})$ is also hereditary. If in addition, $R$ is self FP-injective (i.e. self FP$_1$-injective) then $(\FPinj{1}, (\FPinj{1})^\perp)$ is a perfect cotorsion pair (see \cite[Theorem 3.8]{MaoDing2005} and \cite[Theorem 3.4]{MaoDing2007}). 

We do not need to impose any condition on $R$ in order to prove that $\FPinj{n}$ is the right half of a complete cotorsion pair $(\text{}^\perp(\FPinj{n}), \FPinj{n})$ (for every $n \geq 0$ and $n = \infty$). This will be a consequence of the following more general result.

\begin{proposition} \label{cotorsion-pair-class-of-fin-gen-on-right} 
Let $\mathcal{A}$ be a class $R$-modules such that every $A \in \mathcal{A}$ is finitely generated. Then the cotorsion pair $(^{\perp}(\mathcal{A}^{\perp}), \mathcal{A}^\perp)$ is cogenerated by a set, meaning that there exists a set $\mathcal{S}$ such that $\A^\perp = \mathcal{S}^\perp$. 
\end{proposition}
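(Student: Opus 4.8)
The plan is to reduce the statement to the standard fact (Eklof--Trlifaj) that a cotorsion pair cogenerated by a \emph{set} is always complete, by actually exhibiting such a set $\mathcal{S}$. The key observation is that $\mathcal{A}$ itself is \emph{almost} such a set, except that $\mathcal{A}$ need not be a set — it is only a class — even though each of its members is finitely generated. So the main task is to replace $\mathcal{A}$ by a set $\mathcal{S}$ of finitely generated modules with $\mathcal{S}^\perp = \mathcal{A}^\perp$.

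First I would fix, for each cardinal, a set of representatives: since every $A \in \mathcal{A}$ is finitely generated, $A$ is a quotient of $R^{(k)}$ for some $k < \omega$, so $A \cong R^{(k)}/N$ for some submodule $N \subseteq R^{(k)}$. The collection of all modules of the form $R^{(k)}/N$ with $k < \omega$ and $N \subseteq R^{(k)}$ is, up to isomorphism, a set: there are only countably many $k$, and for each $k$ only a set of submodules $N$ of the fixed module $R^{(k)}$. Call a set of isomorphism representatives of these modules $\mathcal{S}$. Every $A \in \mathcal{A}$ is isomorphic to a member of $\mathcal{S}$, hence $\mathcal{A} \subseteq \mathcal{S}$ up to isomorphism (enlarging $\mathcal{S}$ if one prefers $\mathcal{A}$ literally contained in it does no harm, but is not needed).

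Next I would check $\mathcal{A}^\perp = \mathcal{S}^\perp$. The inclusion $\mathcal{S}^\perp \subseteq \mathcal{A}^\perp$ is immediate once $\mathcal{A} \subseteq \mathcal{S}$ (up to isomorphism, and $\Ext^1_R(-,X)$ is isomorphism-invariant). For the reverse inclusion one in fact does not even need $\mathcal{A}$ and $\mathcal{S}$ to have the same orthogonal — it suffices that the cotorsion pair generated on the right by $\mathcal{A}$ equals the one generated by $\mathcal{S}$; but the cleanest route is: we need not claim $\mathcal{S}^\perp = \mathcal{A}^\perp$ at all if we are willing to let $\mathcal{S}$ be exactly a representing set for $\mathcal{A}$. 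Concretely, take $\mathcal{S}$ to be a set of isomorphism representatives of the members of $\mathcal{A}$ themselves (these form a set by the submodule-of-$R^{(k)}$ argument). Then trivially $\mathcal{S}^\perp = \mathcal{A}^\perp$, and $({}^\perp(\mathcal{A}^\perp), \mathcal{A}^\perp) = ({}^\perp(\mathcal{S}^\perp), \mathcal{S}^\perp)$ is cogenerated by the set $\mathcal{S}$, which is the assertion.

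The only genuine content is therefore the set-theoretic bookkeeping that the isomorphism classes of finitely generated modules form a set — this is where I expect the ``obstacle'' to lie, though it is routine: a finitely generated module is determined up to isomorphism by a pair $(k,N)$ with $k<\omega$ and $N$ a submodule of the fixed module $R^k$ (equivalently $R^{(k)}$), and both ranges are sets. Everything after that is formal: $({}^\perp(\mathcal{A}^\perp),\mathcal{A}^\perp)$ is a cotorsion pair by construction of orthogonal complements, and it is cogenerated by the set $\mathcal{S}$ of those representatives. (In the applications to $\FPinj{n}$ one then takes $\mathcal{A} = \FP{n}$, whose members are finitely generated, to deduce completeness of $({}^\perp(\FPinj{n}),\FPinj{n})$.)
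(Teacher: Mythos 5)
Your proof is correct and follows essentially the same route as the paper: reduce $\mathcal{A}$ to a set $\mathcal{S}$ of isomorphism representatives (possible because finitely generated modules form, up to isomorphism, a set) and then note $\mathcal{S}^\perp = \mathcal{A}^\perp$. The only cosmetic difference is in the set-theoretic bookkeeping — you parametrize finitely generated modules by pairs $(k,N)$ with $N \subseteq R^{(k)}$, while the paper instead bounds $\mathrm{Card}(M) \leq \max\{\aleph_0, \mathrm{Card}(R)\}$; both deliver the same representing set, so the argument is the same in substance.
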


\begin{proof} If $M$ is a finitely generated module, then ${\rm Card}(M) \leq \max\{ \aleph_0, {\rm Card}(R) \}$. This allows us to choose a set $\mathcal{S}$ of representatives of (finitely generated) modules in $\mathcal{A}$, in such a way that every module in $\A$ is isomorphic to a module in $\mathcal{S}$. Then, by \cite[Theorem 3.2.1]{Gobel}, $(^{\perp}(\mathcal{A}^{\perp}), \mathcal{A}^\perp)$ is cogenerated by $\mathcal{S}$. 
\end{proof}

The following result is an immediate consequence of the previous proposition and the Eklof and Trlifaj Theorem (which states that every cotorsion pair cogenerated by a set is complete \cite{EklofTrlifaj}).

\begin{corollary} \label{cotorsion-pairs-FPninj-right} For any ring $R$:
\begin{enumerate}
\item $(^{\perp}(\FP{n}), \FPinj{n})$ is a complete cotorsion pair in $\Rmod$, for every $n \geq 0$.

\item $(^{\perp}(\FP{\infty}), \FPinj{\infty})$ is a complete cotorsion pair in $\Rmod$. 

\end{enumerate} 
\end{corollary}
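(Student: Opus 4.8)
The plan is to derive the corollary directly from Proposition~\ref{cotorsion-pair-class-of-fin-gen-on-right} together with the Eklof--Trlifaj theorem, by checking in each case that the relevant class plays the role of $\A$ in that proposition. First I would observe that for any $n \geq 0$, every module in $\FP{n}$ is by definition finitely generated (indeed it admits a finite $n$-presentation, hence in particular an epimorphism onto it from a finitely generated free module). Thus the class $\A := \FP{n}$ satisfies the hypothesis of Proposition~\ref{cotorsion-pair-class-of-fin-gen-on-right}, so the cotorsion pair $(^{\perp}(\FP{n}^{\perp}), \FP{n}^{\perp})$ is cogenerated by a set $\mathcal{S}$. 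The only remaining point is to identify $\FP{n}^{\perp}$ with $\FPinj{n}$: by the very Definition~\ref{FPn-injective}, a module $M$ lies in $\FPinj{n}$ precisely when $\Ext^1_R(F,M) = 0$ for all $F \in \FP{n}$, which is exactly the condition $M \in \FP{n}^{\perp}$. Hence the pair above is literally $(^{\perp}(\FPinj{n}), \FPinj{n})$.

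Next I would invoke the Eklof--Trlifaj theorem, already cited in the excerpt just before the corollary: any cotorsion pair cogenerated by a set is complete. Since we have just shown $(^{\perp}(\FPinj{n}), \FPinj{n})$ is cogenerated by the set $\mathcal{S}$, it is complete in $\Rmod$, which establishes part (1). For part (2), the argument is identical with $n = \infty$: every module in $\FP{\infty}$ is finitely generated (it admits a finitely generated free resolution, so certainly a finitely generated free module maps onto it), Proposition~\ref{cotorsion-pair-class-of-fin-gen-on-right} applies with $\A := \FP{\infty}$, the right orthogonal $\FP{\infty}^{\perp}$ equals $\FPinj{\infty}$ by Definition~\ref{FPn-injective} (taking $n = \infty$ there), and Eklof--Trlifaj again gives completeness.

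There is no real obstacle here; the content is entirely in the two ingredients already in hand. The only thing that requires a word of care is the set-theoretic step hidden inside Proposition~\ref{cotorsion-pair-class-of-fin-gen-on-right} --- namely that a class of finitely generated modules has, up to isomorphism, only a set of members (bounded cardinality $\max\{\aleph_0, {\rm Card}(R)\}$) --- but that has already been dispatched in the proof of that proposition, so for the corollary I would simply cite it. I would therefore keep the write-up to two or three sentences: note that $\FP{n}$ (respectively $\FP{\infty}$) consists of finitely generated modules, that $\FP{n}^{\perp} = \FPinj{n}$ holds by definition, apply Proposition~\ref{cotorsion-pair-class-of-fin-gen-on-right}, and conclude completeness via Eklof--Trlifaj.
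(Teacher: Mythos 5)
Your proof is correct and follows essentially the same route as the paper's (one-line) argument: apply Proposition~\ref{cotorsion-pair-class-of-fin-gen-on-right} to $\A = \FP{n}$ (or $\FP{\infty}$), identify $\FP{n}^{\perp}$ with $\FPinj{n}$, and conclude completeness by Eklof--Trlifaj. Incidentally, your careful identification of the pair as $(^{\perp}(\FPinj{n}),\FPinj{n})$ reveals a typo in the statement of the corollary, which writes $^{\perp}(\FP{n})$ where $^{\perp}(\FPinj{n})$ is meant (this is consistent with how the corollary is cited later in the proof of Theorem~\ref{characterization-of-n-coh-via-FPninj}).
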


Having obtained cotorsion pairs where $\FPinj{n}$ is on the right slot, we study conditions under which $\FPinj{n}$ is the left half of a complete cotorsion pair.

As mentioned before, in \cite[Theorem 3.4]{MaoDing2007Com} the authors prove that if $R$ is a self FP-injective left coherent ring, then $(\FPinj{1}, (\FPinj{1})^\perp)$ is a perfect cotorsion pair in $\Rmod$. We could use arguments similar to theirs to show the next theorem. However, we provide a simpler proof by using a result from  H. Holm and P. J\o rgensen, that states the following:

\begin{theorem}[{\cite[Theorem 3.4]{Holm}}] \label{Holm-and-Jorgensen-Theorem}
If $\mathcal{F}$ is a class of modules containing the ground ring $R$ and is closed under extensions, direct sums, pure submodules, and pure quotients, then $(\mathcal{F}, \mathcal{F}^\perp)$ is a perfect cotorsion pair in $\Rmod$.

In particular, $\mathcal{F}$ is covering and $\mathcal{F}^{\perp}$ is enveloping.
\end{theorem}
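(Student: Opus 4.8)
Since this is the theorem of Holm and J\o rgensen \cite{Holm}, the plan is to recall the structure of their argument, which proceeds in three stages: first one promotes the hypotheses on $\mathcal{F}$ to closure under direct limits and direct summands; then one shows $(\mathcal{F},\mathcal{F}^\perp)$ is cogenerated by a set, which yields completeness via Eklof--Trlifaj; and finally one upgrades ``complete'' to ``perfect''. For the first stage, note that a direct summand of a module $M$ is in particular a pure submodule of $M$, so $\mathcal{F}$ is closed under direct summands. Moreover, for any direct system $(M_i)_{i \in I}$ with all $M_i \in \mathcal{F}$, the canonical epimorphism $\bigoplus_{i \in I} M_i \twoheadrightarrow \varinjlim M_i$ is a pure epimorphism; since $\bigoplus_i M_i \in \mathcal{F}$ by closure under direct sums, its pure quotient $\varinjlim M_i$ also lies in $\mathcal{F}$. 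Thus $\mathcal{F}$ is closed under direct limits.

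For the second stage, fix an infinite cardinal $\kappa \geq \max\{\aleph_0, {\rm Card}(R)\}$ and let $\mathcal{S}$ be a set of representatives, up to isomorphism, of the modules in $\mathcal{F}$ of cardinality at most $\kappa$; this is a genuine set because there is only a set of $R$-module structures on each set of size $\leq \kappa$. The key input is the L\"owenheim--Skolem type property of purity: for every module $M$ and every subset $X \subseteq M$ with ${\rm Card}(X) \leq \kappa$ there is a pure submodule $N \leq M$ containing $X$ with ${\rm Card}(N) \leq \kappa$. Applying this transfinitely to a module $M \in \mathcal{F}$ produces a continuous increasing chain $0 = M_0 \subseteq M_1 \subseteq \cdots \subseteq M_\alpha \subseteq \cdots$ of pure submodules of $M$ with union $M$ and with each quotient $M_{\alpha+1}/M_\alpha$ of cardinality $\leq \kappa$; being a pure quotient of the pure submodule $M_{\alpha+1}$, each such quotient lies in $\mathcal{F}$, hence in $\mathcal{S}$. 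Therefore every module in $\mathcal{F}$ is $\mathcal{S}$-filtered. By Eklof's Lemma, $\mathcal{S}$-filtered modules lie in $^\perp(\mathcal{S}^\perp)$, so $\mathcal{F} \subseteq {}^\perp(\mathcal{S}^\perp)$; conversely, by the Eklof--Trlifaj Theorem \cite{EklofTrlifaj} every module in $^\perp(\mathcal{S}^\perp)$ is a direct summand of an $\mathcal{S}$-filtered module, and such a module lies in $\mathcal{F}$ by transfinite induction (successor steps use closure under extensions, limit steps use closure under direct limits), so the first stage gives $^\perp(\mathcal{S}^\perp) \subseteq \mathcal{F}$. Hence $(\mathcal{F}, \mathcal{F}^\perp) = (^\perp(\mathcal{S}^\perp), \mathcal{S}^\perp)$ is a cotorsion pair cogenerated by the set $\mathcal{S}$, and so it is complete by the Eklof--Trlifaj Theorem.

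For the third stage, one invokes the standard fact that a complete cotorsion pair $(\mathcal{A}, \mathcal{B})$ whose left-hand class is closed under direct limits is perfect, i.e.\ $\mathcal{A}$ is covering and $\mathcal{B}$ is enveloping; here the hypothesis $R \in \mathcal{F}$, together with closure under direct sums and direct summands, ensures $\Proj \subseteq \mathcal{F}$, so that the special $\mathcal{F}$-precovers are epimorphisms and the covering machinery applies. Combining the three stages proves the statement. The main obstacle is the second stage: the L\"owenheim--Skolem property of purity and the resulting $\mathcal{S}$-filtration of an arbitrary member of $\mathcal{F}$ are exactly what convert the purely categorical closure hypotheses into the set-theoretic smallness needed to apply Eklof--Trlifaj; once direct-limit closure is available, everything else is formal.
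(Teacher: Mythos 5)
The paper does not prove this statement---it is imported verbatim as \cite[Theorem 3.4]{Holm}---so there is no in-text proof to compare against; what you have written is a correct reconstruction of the Holm--J{\o}rgensen argument. All three stages check out: (1) a direct summand is a pure submodule, and for a directed system the canonical surjection $\bigoplus_i M_i \twoheadrightarrow \varinjlim M_i$ is pure (it is a direct limit of the split surjections $\bigoplus_{i \leq j} M_i \twoheadrightarrow M_j$), so $\mathcal{F}$ is closed under direct summands and direct limits; (2) the L\"owenheim--Skolem property of purity, together with closure under pure submodules and pure quotients (and the transitivity fact that a pure submodule of $M$ is pure in any intermediate pure submodule), yields an $\mathcal{S}$-filtration of every $M \in \mathcal{F}$, and Eklof's Lemma plus the Eklof--Trlifaj Theorem then give $\mathcal{F} = {}^{\perp}(\mathcal{S}^{\perp})$ and completeness of the resulting cotorsion pair cogenerated by the set $\mathcal{S}$; (3) a complete cotorsion pair whose left class is closed under direct limits is perfect. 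One minor remark: the observation that $\Proj \subseteq \mathcal{F}$ makes special $\mathcal{F}$-precovers epimorphisms is true but not needed for the complete-to-perfect step---Enochs's theorem passing from precovers to covers under direct-limit closure, and Wakamatsu's lemma giving envelopes of $\mathcal{F}^{\perp}$ from covers of $\mathcal{F}$, do not rely on surjectivity---so this clause can be omitted without loss.
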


Recall that a class $\C$ in $\Rmod$ is said to be \textbf{covering} provided that for each module $M \in \Rmod$ there is a $\C$-cover. A map $f \in \Hom_R(C,M)$ is called a $\C$-cover of $M$ if $C \in \C$, the sequence $\Hom_R(C',C) \to \Hom(C',M) \to 0$ is exact for every $C' \in \C$, and the equality $f \circ g = f$ implies $g$ is an automorphism of $C$.

Dually, a class $\C$ in $\Rmod$ is said to be \textbf{enveloping} provided that for each module $M \in \Rmod$ there is a $\C$-envelope. A map $f \in \Hom_R(M,C)$ is called a $\C$-envelope of $M$ if $C \in \C$, the sequence $\Hom_R(C,C') \to \Hom(M,C') \to 0$ is exact for every $C' \in \C$, and the equality $f = f \circ g$ implies $g$ is an automorphism of $C$.

Finally, we say that a cotorsion pair $(\A,\B)$ is called \textbf{perfect} if $\A$ is covering and $\B$ is enveloping.   Now we are ready to state the conditions under which $\FPinj{n}$ and $\FPinj{\infty}$ are the left half of a cotorsion pair.

\begin{theorem}\label{cotorsion-pairs-FPninj-left} Let $n>1$.
\begin{enumerate}
\item If $R$ is a self FP$_{n}$-injective ring (i.e. $R \in \FPinj{n}$), then there is a perfect cotorsion pair $(\FPinj{n}, (\FPinj{n})^\perp)$ in $\Rmod$. 

\item If $R$ is a self FP$_{\infty}$-injective ring (i.e. $R \in \FPinj{\infty}$), then there is a perfect cotorsion pair $(\FPinj{\infty}, (\FPinj{\infty})^\perp)$ in $\Rmod$. 
\end{enumerate} 
\end{theorem}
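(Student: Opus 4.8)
The plan is to derive both statements directly from the Holm--J\o rgensen theorem (Theorem \ref{Holm-and-Jorgensen-Theorem}) by verifying, for $\mathcal{F} = \FPinj{n}$ (resp. $\mathcal{F} = \FPinj{\infty}$), the four hypotheses: containing $R$, closure under extensions, closure under direct sums, and closure under pure submodules and pure quotients. Three of these are already available: closure under extensions is Proposition \ref{properties-FPninj}\eqref{FPninj-closed-extensions}, and closure under pure submodules and pure quotients is Proposition \ref{properties-FPninj}\eqref{FPninj-closed-pure}. Closure under direct sums is not stated verbatim, but it is immediate: for $F \in \FP{n}$ with $n > 1$ we have $F \in \FP{1}$, so $\Hom_R(F,-)$ commutes with direct limits and in particular $\Ext^1_R(F,-)$ commutes with direct sums (this is part of Theorem \ref{FPn_Ext_Tor}, already invoked for closure under direct limits in Proposition \ref{properties-FPninj}\eqref{FPninj-closed-direct-limits}); hence a direct sum of modules killing $\Ext^1_R(F,-)$ again kills it. Alternatively, closure under direct sums follows from closure under direct limits plus closure under finite direct sums, the latter being a trivial consequence of additivity of $\Ext^1_R(F,-)$.

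The one hypothesis that genuinely uses the ring-theoretic assumption is ``$R \in \mathcal{F}$'': for part (1) this is exactly the hypothesis that $R$ is self FP$_n$-injective, i.e. $R \in \FPinj{n}$, and for part (2) it is exactly $R \in \FPinj{\infty}$. So the proof is essentially: assume $R \in \FPinj{n}$; by the propositions cited above together with the direct-sum observation, $\FPinj{n}$ is closed under extensions, direct sums, pure submodules and pure quotients, and contains $R$; therefore Theorem \ref{Holm-and-Jorgensen-Theorem} applies and $(\FPinj{n},(\FPinj{n})^\perp)$ is a perfect cotorsion pair. Part (2) is word-for-word the same with $n$ replaced by $\infty$, noting that Proposition \ref{properties-FPninj} and the direct-sum argument are equally valid in the $n = \infty$ case (for $\FP{\infty} \subseteq \FP{1}$, the relevant commutation statements still hold).

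I do not expect any serious obstacle here; the content was front-loaded into Proposition \ref{properties-FPninj}, and the whole point of citing Holm--J\o rgensen rather than adapting the Mao--Ding argument is to make this a short bookkeeping proof. The only mild subtlety worth a sentence in the write-up is spelling out why $\FPinj{n}$ is closed under arbitrary direct sums, since Proposition \ref{properties-FPninj} lists direct limits and direct products but not direct sums explicitly; I would handle this by remarking that a direct sum is a direct limit of its finite subsums and $\FPinj{n}$ is closed under both finite direct sums (additivity of $\Ext^1$) and direct limits (Proposition \ref{properties-FPninj}\eqref{FPninj-closed-direct-limits}), or equivalently by the direct appeal to Theorem \ref{FPn_Ext_Tor}. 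With that in hand, the proof of both items is a single application of Theorem \ref{Holm-and-Jorgensen-Theorem}.
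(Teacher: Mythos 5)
Your proposal matches the paper's proof: both parts are a direct application of Theorem~\ref{Holm-and-Jorgensen-Theorem}, with the closure properties supplied by Proposition~\ref{properties-FPninj} (the paper cites \cite[Proposition 2.7]{BGH} for the $\infty$ case, but the content is the same) and the nontrivial hypothesis being precisely $R \in \FPinj{n}$. You do add one small improvement the paper leaves tacit, namely spelling out that closure under direct sums follows from closure under direct limits together with additivity of $\Ext^1_R(F,-)$, since Proposition~\ref{properties-FPninj} lists products and limits but not sums verbatim; that is a legitimate and worthwhile clarification.
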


\begin{proof} 
In the first statement, note that the hypothesis $R \in \FPinj{R}$ and Proposition~\ref{properties-FPninj} gives all the properties required in Theorem~\ref{Holm-and-Jorgensen-Theorem}.  Therefore we get the  perfect cotorsion pair $(\FPinj{n}, (\FPinj{n})^\perp)$. 

The second statement follows similarly by the hypothesis $R \in \FPinj{\infty}$ and \cite[Proposition 2.7]{BGH}.
\end{proof}

We conclude this section by showing that the class $\FPflat{n}$ is also the left half of a perfect cotorsion pair, for every ring $R$. We use the result from \cite{Holm} again for that purpose. Observe that in  Theorem 2.14 from \cite{BGH}, the authors prove that the class $\FPflat{\infty}$ of level modules, is the left half of a perfect cotorsion pair.

\begin{theorem} \label{cotorsion-pair-FPnflat-left} The class $\FPflat{n}$ of ${\rm FP}_n$-flat modules is the left half of a perfect cotorsion pair $(\FPflat{n}, (\FPflat{n})^\perp)$.
\end{theorem}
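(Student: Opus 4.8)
The plan is to invoke the Holm--J\o rgensen criterion (Theorem~\ref{Holm-and-Jorgensen-Theorem}) exactly as in the proof of Theorem~\ref{cotorsion-pairs-FPninj-left}, but now applied to the class $\FPflat{n}$ in the category $\modR$ of right $R$-modules (or, symmetrically, in $\Rmod$ — one should fix the side once and for all). Concretely, I would verify the four hypotheses of that theorem for $\mathcal{F} = \FPflat{n}$: that it contains the ground ring $R$, and that it is closed under extensions, arbitrary direct sums, pure submodules, and pure quotients.

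The bulk of these verifications is already available. Closure under extensions is Proposition~\ref{properties-FPnflat}\eqref{FPnflat-closed-extension}, and closure under pure submodules and pure quotients is Proposition~\ref{properties-FPnflat}\eqref{FPnflat-closed-pure}. For closure under direct sums, I would note that $\Tor_1^R(F,-)$ commutes with arbitrary direct sums for \emph{any} module $F$ (Tor always commutes with direct limits, hence with coproducts), so if $\Tor_1^R(F,M_i)=0$ for all $i$ then $\Tor_1^R(F,\bigoplus_i M_i)\cong\bigoplus_i\Tor_1^R(F,M_i)=0$; thus $\FPflat{n}$ is closed under direct sums with no hypothesis on $R$. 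Finally, $R$ itself is flat, hence in $\FPflat{0}\subseteq\FPflat{n}$ by the chain \eqref{chain-of-FPn-flat}; more directly, $\Tor_1^R(F,R)=0$ for every $F$. With all four conditions in hand, Theorem~\ref{Holm-and-Jorgensen-Theorem} immediately yields that $(\FPflat{n},(\FPflat{n})^\perp)$ is a perfect cotorsion pair, with $\FPflat{n}$ covering and $(\FPflat{n})^\perp$ enveloping.

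I do not anticipate a genuine obstacle here: unlike the injective case, no self-injectivity or coherence assumption on $R$ is needed, because the functor $\Tor_1^R(F,-)$ (as opposed to $\Ext^1_R(F,-)$) automatically commutes with direct sums, which is precisely the property that fails for $\Ext$ and forced the hypothesis $R\in\FPinj{n}$ in Theorem~\ref{cotorsion-pairs-FPninj-left}. The only point requiring a modicum of care is bookkeeping of sides: $\FPflat{n}$ as defined in Definition~\ref{FPn-flat} consists of \emph{left} $R$-modules tested against \emph{right} modules $F\in\FP{n}$, so one should either state Theorem~\ref{Holm-and-Jorgensen-Theorem} for $\Rmod$ (as it is stated) and check the closure properties there — which is exactly what Proposition~\ref{properties-FPnflat} does — or pass to the opposite ring; either way the argument is uniform. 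One could also remark in passing that the same proof with $\FPflat{\infty}$ recovers \cite[Theorem 2.14]{BGH}, and that combined with Proposition~\ref{FPnflat-iff-FPninj^+} the duality $M\mapsto M^+$ sends $\FPflat{n}$-covers to $\FPinj{n}$-envelopes in the appropriate sense.
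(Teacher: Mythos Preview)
Your proposal is correct and follows essentially the same route as the paper: verify that $R\in\FPflat{n}$ and cite Proposition~\ref{properties-FPnflat} for the closure properties, then apply Theorem~\ref{Holm-and-Jorgensen-Theorem}. The only cosmetic difference is that you argue closure under direct sums directly from $\Tor$ commuting with coproducts, whereas the paper obtains it as a special case of closure under direct limits (Proposition~\ref{properties-FPnflat}\eqref{FPnflat-closed-direct-limits}); either way is fine.
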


\begin{proof} First, note that $R \in \FPflat{n}$ for every $n \geq 0$.  The rest follows by Proposition \ref{properties-FPnflat} and Theorem \ref{Holm-and-Jorgensen-Theorem}.
\end{proof}

\begin{remark} \ 
\begin{enumerate}
\item In analogy with both cotorsion pairs for the class $\FPinj{n}$, we would like to have a cotorsion pair such that the class $\FPflat{n}$ is in the right side of the pair; however, this goal, so far, seems elusive. Part of this, is that it is not clear what hypothesis are needed, since unlike the version with $\FPinj{n}$ on the left, the ground ring is always in $\FPflat{n}$.
\item Theorem \ref{cotorsion-pairs-FPninj-left} can be extended  to the case $n = 1$, by requiring the extra hypothesis of $R$ being coherent \cite[Theorem 3.4]{MaoDing2007Com}. 
\end{enumerate}
\end{remark}

This remark raises some questions about the implication of the $n$-coherency of the ground ring over the cotorsion pairs in Corollary~\ref{cotorsion-pairs-FPninj-right} and Theorem~\ref{cotorsion-pair-FPnflat-left}. This is what we study in the next section.


\section{Characterization of $n$-coherency via cotorsion pairs}

We begin by observing that if $F \in \FP{n}$, then $\Ext^1_R(F,M)=0$ for all $M \in \FPinj{n}$. We would like to state the reciprocal also, i.e. if a module $F$ is such that $\Ext^1_R(F,M)=0$ for all $M \in \FPinj{n}$, then $F \in \FP{n}$. This is known to be true in the case $n = 1$, shown by S. Glaz, in the form we state below.

\begin{theorem}[{\cite[Theorem 2.1.10]{glaz}}] \label{Glaz-theorem-equivalence-fin-presented}
Let $R$ be a ring and let $M$ be a finitely generated module satisfying $\Ext^1_R(M,N)=0$ for all $N \in \FPinj{1}$, then $M \in \FP{1}$.
\end{theorem}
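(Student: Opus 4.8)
The plan is to reduce the statement to a finiteness property of a syzygy. Since $M \in \FP{0}$, fix a short exact sequence $0 \to K \to F \to M \to 0$ with $F$ finitely generated free; by Schanuel's Lemma~\ref{Schanuel}, $M \in \FP{1}$ if and only if this particular $K$ is finitely generated, so it suffices to prove the latter. Write $K$ as the directed union $\varinjlim_i K_i$ of its finitely generated submodules. Then each $F/K_i$ lies in $\FP{1}$, the canonical maps $\pi_i\colon F/K_i \twoheadrightarrow M$ are surjective, and $M = \varinjlim_i (F/K_i)$. If some $\pi_i$ splits, then $M$ is a direct summand of the finitely presented module $F/K_i$, hence $M \in \FP{1}$ by Proposition~\ref{properties_FPn}\eqref{FPn-closed-under-direct-summands}; so it is enough to show that $\pi_i$ splits for some $i$. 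Equivalently, letting $\varepsilon \in \Ext^1_R(M,K)$ be the Yoneda class of $0 \to K \to F \to M \to 0$ and $\varepsilon_i \in \Ext^1_R(M,K/K_i)$ its image under $\Ext^1_R(M,K) \to \Ext^1_R(M,K/K_i)$, it suffices to show that $\varepsilon_i = 0$ for some $i$; these classes are compatible along the transition maps of the directed system $(K/K_i)_i$.

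The decisive observation is that $\varinjlim_i (K/K_i) = 0$: if $\Ext^1_R(M,-)$ commuted with this direct limit we would conclude at once that $\varepsilon_i = 0$ for some $i$, but that commutation is equivalent, via Theorem~\ref{FPn_Ext_Tor}, to $M \in \FP{2}$, which we cannot presuppose. The hypothesis $\Ext^1_R(M,N) = 0$ for all $N \in \FPinj{1}$ is exactly what should substitute for this missing continuity, and the line I would pursue brings it in through the character-module machinery of Theorem~\ref{Tor-Ext-relations}. First, the canonical epimorphism $\Phi\colon \bigoplus_i (F/K_i) \twoheadrightarrow M$ is \emph{pure}: for finitely presented $P$, $\Hom_R(P,-)$ commutes with direct limits and with the finite subsums that constitute $\bigoplus_i(F/K_i)$, so $\Hom_R(P,\Phi)$ is the composite $\bigoplus_i \Hom_R(P,F/K_i) \twoheadrightarrow \varinjlim_i \Hom_R(P,F/K_i) = \Hom_R(P,M)$, which is onto. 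Dualizing the pure sequence $0 \to \kker{\Phi} \to \bigoplus_i(F/K_i) \to M \to 0$ yields a \emph{split} sequence $0 \to M^+ \to \prod_i(F/K_i)^+ \to (\kker{\Phi})^+ \to 0$, so $M^+$ is a direct summand of $\prod_i(F/K_i)^+$. The aim is then to combine this splitting with the dual form of the hypothesis obtained from Theorem~\ref{Tor-Ext-relations}\eqref{Ext(M-N-dual)-Tor(M-N)-dual} (which turns the vanishing of $\Ext^1_R(M,N^+)$ into the vanishing of $\Tor^R_1(\,\cdot\,,M)^+$) and with the finite generation of $M$, so as to force the obstructions $\varepsilon_i$ to die at a finite stage.

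I expect the main obstacle to be precisely this last synthesis. There is a genuine gap between "$\Ext^1_R(M,-)$ kills FP-injectives" and "$\Ext^1_R(M,-)$ commutes with $\varinjlim_i(K/K_i)$", and the obvious shortcut fails: one cannot simply embed each $K/K_i$ into its injective envelope and pass to the colimit, because over a non-coherent $R$ the class $\FPinj{1}$ is not closed under direct limits, so that colimit need not be FP-injective and the hypothesis would not apply to it. Closing the argument requires a more careful bookkeeping with pure-injective (double-dual) envelopes and with the internal structure of the $F/K_i$; this is the technical heart of the result, and it is the content of \cite[Theorem~2.1.10]{glaz}.
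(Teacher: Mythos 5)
The paper does not prove this statement; it simply invokes Glaz's Theorem~2.1.10, so there is no in-house argument to compare against. Your proposal also does not constitute a proof: you set the stage carefully and then explicitly stop short, deferring the "technical heart" back to the very reference you are supposed to be replacing. That deferral is the gap.

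To be fair, the parts you do write are correct and well chosen. The reduction to showing that the syzygy $K$ in $0 \to K \to F \to M \to 0$ is finitely generated is right, and so is the reformulation in terms of the compatible obstruction classes $\varepsilon_i \in \Ext^1_R(M, K/K_i)$ dying at a finite stage. You also put your finger on the genuine subtlety: $\varinjlim_i K/K_i = 0$, but $\Ext^1_R(M,-)$ commuting with that colimit is exactly $M \in \FP{2}$ (Theorem~\ref{FPn_Ext_Tor}), which is the conclusion one is after, not something one may assume; and the shortcut via $\varinjlim_i E(K/K_i)$ indeed breaks down because, over a non-coherent ring, $\FPinj{1}$ is not closed under direct limits (the closure proof in Proposition~\ref{properties-FPninj}\eqref{FPninj-closed-direct-limits} genuinely needs $n>1$). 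The purity of $\bigoplus_i(F/K_i) \twoheadrightarrow M$ and the split dual sequence are likewise correct observations. But none of this yet makes contact with the hypothesis in a way that forces some $\varepsilon_i$ to vanish: you identify where the hypothesis \emph{should} enter, sketch the character-module formalism you would invoke, and then concede that the synthesis is missing. A sketch that names its own missing step is not a proof; the decisive argument --- how the vanishing of $\Ext^1_R(M,-)$ on all of $\FPinj{1}$ actually kills $\varepsilon_i$ for large $i$ without presupposing the continuity you correctly ruled out --- is precisely what still has to be supplied.
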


If in the the previous result we replace the part of finitely generated by $\FP{n-1}$, then we obtain the following generalization.

\begin{lemma} \label{characterization-of-FPn-by-Ext1} 
Let $n \geq 1$, then the following conditions are equivalent for every $M \in \Rmod$.
\begin{enumerate}
\item $M \in \FP{n-1}$ and $\Ext^1_R(M,N) = 0$ for every $N \in \FPinj{n}$.  \label{M-is-in-FP(n-1)-and-Ext1-is-0}

\item $M \in \FP{n}$. \label{M-is-in-FPn}
\end{enumerate}
\end{lemma}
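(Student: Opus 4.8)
The implication \eqref{M-is-in-FPn} $\Rightarrow$ \eqref{M-is-in-FP(n-1)-and-Ext1-is-0} is immediate: if $M \in \FP{n}$ then $M \in \FP{n-1}$ by the chain \eqref{chain-of-FPn}, and $\Ext^1_R(M,N) = 0$ for every $N \in \FPinj{n}$ by the very definition of $\FPinj{n}$ (Definition \ref{FPn-injective}). So the whole content is in the reverse implication, and the plan is to mimic Glaz's argument for $n = 1$ (Theorem \ref{Glaz-theorem-equivalence-fin-presented}) one homological degree up.

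Assume $M \in \FP{n-1}$ and $\Ext^1_R(M,N) = 0$ for all $N \in \FPinj{n}$; we must produce a finite $n$-presentation of $M$. Since $M \in \FP{n-1}$, fix a finite $(n-1)$-presentation $F_{n-1} \to \cdots \to F_0 \to M \to 0$ with each $F_i$ finitely generated free, and let $K$ be the $(n-1)$-st syzygy, so that there is an exact sequence $\ses{K}{F_{n-1}}{K'}$ where $K'$ is the $(n-2)$-nd syzygy (for $n=1$, $K' = M$). The key reduction: $M \in \FP{n}$ if and only if $K$ is finitely generated, and since $K$ is already the syzygy of a finitely presented module it is automatically in $\FP{0}$-adjacent good shape; more precisely, by dimension shifting $\Ext^1_R(M,N) \cong \Ext^{n-1}_R(K',N)$-type relations will let me transfer the vanishing hypothesis on $M$ down to a vanishing statement about $K$. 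The cleanest route is: show directly that $K$ is finitely generated, then invoke Glaz's Theorem \ref{Glaz-theorem-equivalence-fin-presented} (the $n=1$ case) applied to $K$ to conclude $K \in \FP{1}$, which splices with the finite $(n-1)$-presentation of $M$ to give a finite $(n+1)$-presentation — more than enough.

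To see $K$ is finitely generated, I would argue by contradiction using the structure of $\FPinj{n}$ together with Theorem \ref{characterization-of-n-coherent}-style thickness reasoning, or more concretely: if $K$ were not finitely generated, write $K = \varinjlim K_\alpha$ as the direct limit of its finitely generated submodules, form the pushouts of $\ses{K}{F_{n-1}}{K'}$ along $K \twoheadrightarrow K/K_\alpha$ to get modules $M_\alpha$ fitting in $\ses{K/K_\alpha}{M_\alpha}{K'}$, and observe that each $K/K_\alpha$, being a quotient relevant to the syzygy structure, can be arranged to land in $\FPinj{n}$ after a further manipulation — then the non-vanishing of the relevant $\Ext^1$ contradicts the hypothesis. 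Alternatively, and I think more robustly, I would realize $\FPinj{n}$ as $(\FP{n})^\perp$ and use that $\FP{n-1}$-modules whose syzygy is infinitely generated detect, via Schanuel's Lemma (Lemma \ref{Schanuel}) and Theorem \ref{FPn_Ext_Tor}, a failure of the Ext-vanishing.

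The main obstacle is precisely the step ``$K$ is finitely generated.'' The hypothesis only gives vanishing of $\Ext^1_R(M,-)$ against $\FPinj{n} = (\FP{n})^\perp$, which is a priori a large class, and I need to manufacture, from an assumed infinitely generated syzygy $K$, an explicit $N \in \FPinj{n}$ with $\Ext^1_R(M,N) \neq 0$. The delicate point is ensuring the test module $N$ we build actually lies in $\FPinj{n}$ and not merely in some larger orthogonal class; this is where the precise relationship $\Ext^1_R(M,N) \cong \Ext^1_R(K,N)$ (valid because the intermediate free modules $F_0,\dots,F_{n-1}$ kill higher Ext, i.e.\ $\Ext^i_R(F_j,N)=0$ for $i>0$) must be combined with Glaz's $n=1$ result, which already packages exactly the needed construction for syzygies of finitely presented modules. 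So the real work is bookkeeping: reducing cleanly to degree one via dimension shifting, checking the finitely-generated syzygy $K$ is genuinely a finitely presented module's worth of data, and then citing Theorem \ref{Glaz-theorem-equivalence-fin-presented} verbatim.
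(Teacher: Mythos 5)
There is a genuine gap, and also a direction error in the dimension-shifting. You correctly dispatch \eqref{M-is-in-FPn} $\Rightarrow$ \eqref{M-is-in-FP(n-1)-and-Ext1-is-0}, and you correctly identify Glaz's Theorem~\ref{Glaz-theorem-equivalence-fin-presented} as the base of the argument, but the route you sketch for the converse does not close. You aim at the top syzygy $K$ of a finite $(n-1)$-presentation and try to show $K$ is finitely generated and then apply Glaz to $K$. Two problems. First, to apply Glaz to $K$ you need $\Ext^1_R(K,N)=0$ for all $N\in\FPinj{1}$; by dimension shifting through the projective $F_i$ this is $\Ext^{n+1}_R(M,N)=0$ for all $N\in\FPinj{1}$, whereas the hypothesis gives only $\Ext^1_R(M,N)=0$ for $N\in\FPinj{n}$. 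These are not comparable without an extra cosyzygy argument, and your proposed identity ``$\Ext^1_R(M,N)\cong\Ext^{n-1}_R(K',N)$'' runs the shift backwards: syzygies of $M$ raise, not lower, the $\Ext$-degree against $M$. Second, the fallback you offer for proving ``$K$ is finitely generated'' (a direct-limit/pushout contradiction producing a witness $N\in\FPinj{n}$ with $\Ext^1_R(M,N)\neq 0$) is precisely the content of Glaz's own $n=1$ proof, and you acknowledge you cannot control whether your test module actually lands in $\FPinj{n}$. As written, nothing pins this down.

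The paper avoids all of this by inducting on $n$ and working with the \emph{first} syzygy $K=\kker{f_0}$ rather than the top one. Since $M\in\FP{n-1}$, one has $K\in\FP{n-2}$, and by the inductive hypothesis it suffices to prove $\Ext^1_R(K,N)=0$ for all $N\in\FPinj{n-1}$. From $\ses{K}{F_0}{M}$ and projectivity of $F_0$, this reduces to $\Ext^2_R(M,N)=0$. The key lemma that you do not have is a cosyzygy statement: if $N\in\FPinj{n-1}$ and $0\to N\to I\to\Omega^{-1}(N)\to 0$ is exact with $I$ injective, then $\Omega^{-1}(N)\in\FPinj{n}$. This is checked by taking $F\in\FP{n}$ and noting $\Ext^1_R(F,\Omega^{-1}(N))\hookrightarrow\Ext^2_R(F,N)\cong\Ext^1_R(\Omega(F),N)$, where $\Omega(F)\in\FP{n-1}$; since $N\in\FPinj{n-1}$, this vanishes. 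With the cosyzygy in $\FPinj{n}$, the hypothesis on $M$ now applies: $\Ext^2_R(M,N)\cong\Ext^1_R(M,\Omega^{-1}(N))=0$. This is the degree-reconciliation step your plan is missing, and it is what lets the induction close; without it, the gap between $\FPinj{n}$ (where you have vanishing) and $\FPinj{1}$ (where Glaz needs vanishing) cannot be bridged in one jump.
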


\begin{proof} 
We begin by assuming \eqref{M-is-in-FP(n-1)-and-Ext1-is-0} in order to show \eqref{M-is-in-FPn}, and use induction on $n \geq 1$. The case $n = 1$ is Theorem \ref{Glaz-theorem-equivalence-fin-presented}. Suppose the result is true for $n-1$. That is
	$M \in \FP{n-2}$ and $\Ext^1_R(M,N) = 0$ for every $N \in \FPinj{n-1}$, implies $M \in \FP{n-1}$.

Consider  $M \in \FP{n-1}$ satisfying $\Ext^1_R(M,N) = 0$ for every $N \in \FPinj{n}$. Consider a finite $(n-1)$-presentation of $M$, say
\[
F_{n-1} \to F_{n-2} \to \cdots \to F_{1} \to F_0 \stackrel{f_0}\to M \to 0
\]
If we show that $K = \kker{f_0} \in \FP{n-1}$, then we are done, since this will produce a finite $n$-presentation of $M$. To do this we apply the induction hypothesis stated above, since we already have that $K \in \FP{n-2}$. Thus we only have to show that $\Ext^1_R(K, N) = 0$ for every $N \in \FPinj{n-1}$.

Let $N \in \FPinj{n-1}$ and consider the induced exact sequence 
\[
\Ext^1_R(F_0, N) \to \Ext^1_R(K, N) \to \Ext^2_R(M,N).
\]
Note that $\Ext^1_R(F_0, N) = 0$, since $F_0$ is projective. All we are left to do, to conclude that $K \in \FP{n-1}$, is to show that $\Ext^2_R(M,N) = 0$. Consider the following short exact sequence
\[
0 \to N \to I \to \Omega^{-1}(N) \to 0,
\] 
where $I$ is an injective module and $\Omega^{-1}(N)$ is the first cosyzygy of $N$. 

Now, by hypothesis, $M$ is left orthogonal to every FP$_n$-injective module, so if  we show that $\Omega^{-1}(N) \in \FPinj{n}$, then $0 = \Ext^1_R(M,\Omega^{-1}(N)) \cong \Ext^2_R(M,N)$.  Let $F \in \FP{n}$ and consider the following induced exact sequence:
\[
\Ext^1_R(F, I) \to \Ext^1_R(F, \Omega^{-1}(N)) \to \Ext^2_R(F,N).
\]
This time $\Ext^1_R(F, I) = 0$ since $I$ is injective. On the other hand, $\Ext^2_R(F,N) \cong \Ext^1_R(\Omega^{1}(F), N)$, with $\Omega^{-1}(F) \in \FP{n-1}$. But $N \in \FPinj{n-1}$, so we obtain $\Ext^1_R(\Omega^{1}(F), N) = 0$, and thus $\Ext^2_R(F,L) = 0$. This gives us that 
\[
\Ext^1_R(F,\Omega^{-1}(N)) = 0, 
\]
for every $F \in \FP{n}$, which says that $\Omega^{-1}(N) \in \FPinj{n}$. Hence $\Ext^2_R(M,N) = 0$. 

Finally $\Ext^1_R(K,N) = 0$ for every $N \in \FPinj{n-1}$, which along with the fact that $K \in \FP{n-2}$, and the induction hypothesis, gives us $K \in \FP{n-1}$. Therefore, $M \in \FP{n}$. 

The direction \eqref{M-is-in-FPn} implies \eqref{M-is-in-FP(n-1)-and-Ext1-is-0} is clear. 
\end{proof}

We also provide a version of the previous lemma for the class of $\text{FP}_n$-flat modules.

\begin{lemma} \label{characterization-of-FPn-by-Tor1} 
Let $n > 1$, then the following conditions are equivalent for every $M \in \modR$.
\begin{enumerate}
\item $M \in \FP{n-1}$ and $\Tor^R_1(N,M) = 0$ for every $N \in \FPflat{n}$. \label{M-is-in-FP(n-1)-and-Tor1-is-0}

\item $M \in \FP{n}$.  \label{M-is-in-FPn-Tor1}
\end{enumerate}
\end{lemma}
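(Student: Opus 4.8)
The plan is to mirror the proof of Lemma~\ref{characterization-of-FPn-by-Ext1}, but to sidestep re-running the induction by instead \emph{reducing} to that lemma through character modules. The direction \eqref{M-is-in-FPn-Tor1} $\Rightarrow$ \eqref{M-is-in-FP(n-1)-and-Tor1-is-0} is immediate: if $M \in \FP{n}$ then $M \in \FP{n-1}$ by the chain \eqref{chain-of-FPn}, and $\Tor^R_1(N,M) = 0$ for every $N \in \FPflat{n}$ by the very definition of $\FPflat{n}$. So the work lies entirely in the converse.

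Assume \eqref{M-is-in-FP(n-1)-and-Tor1-is-0}. Since Lemma~\ref{characterization-of-FPn-by-Ext1} holds verbatim over $R^{\rm op}$, i.e.\ for right $R$-modules (by left-right symmetry), and since we already know $M \in \FP{n-1}$, it suffices to prove that $\Ext^1_R(M, L) = 0$ for every right $R$-module $L \in \FPinj{n}$. Fix such an $L$. By Proposition~\ref{FPninj-iff-FPflat^+} the character module $L^+$ is a left $R$-module lying in $\FPflat{n}$, so the hypothesis gives the vanishing of $\Tor^R_1(M, L^+)$. Applying part~\eqref{Ext(M-N-dual)-Tor(M-N)-dual} of Theorem~\ref{Tor-Ext-relations} with $N = L^+$ then yields
\[
\Ext^1_R(M, L^{++}) \cong \Tor^R_1(M, L^+)^+ = 0 .
\]

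It remains to descend from $L^{++}$ to $L$. For this, recall the standard fact that the canonical evaluation map $L \to L^{++}$ is a pure monomorphism (see \cite[Lemma 1.2.13]{Gobel}), yielding a pure exact sequence $\ses{L}{L^{++}}{C}$. Because $n > 1$, we have $M \in \FP{n-1} \subseteq \FP{1}$, so $M$ is finitely presented, and applying $\Hom_R(M, -)$ to a pure exact sequence preserves exactness; in particular $\Hom_R(M, L^{++}) \to \Hom_R(M, C)$ is surjective. In the long exact sequence of $\Ext_R(M, -)$ attached to $\ses{L}{L^{++}}{C}$ this forces the connecting homomorphism $\Hom_R(M, C) \to \Ext^1_R(M, L)$ to vanish, so $\Ext^1_R(M, L)$ injects into $\Ext^1_R(M, L^{++}) = 0$ and is therefore zero. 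As $L$ was an arbitrary module in $\FPinj{n}$, Lemma~\ref{characterization-of-FPn-by-Ext1} (over $R^{\rm op}$) gives $M \in \FP{n}$.

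The only delicate points are bookkeeping ones: keeping the left/right module structures straight when passing to character modules and invoking the Ext--Tor isomorphism, and recalling the two standard facts that $L \to L^{++}$ is pure and that purity is detected by $\Hom_R(F,-)$ for $F$ finitely presented. The latter is exactly where the hypothesis $n > 1$ enters, since it guarantees $M$ is at least finitely presented. Beyond this I do not anticipate any substantive obstacle, as everything else is a formal consequence of results already established.
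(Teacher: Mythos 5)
Your proof is correct, and it departs from the paper's argument in a way worth noting. Both proofs reduce to Lemma~\ref{characterization-of-FPn-by-Ext1} by showing that $\Ext^1_R(M,N)=0$ for every $N \in \FPinj{n}$, but they get there differently. The paper applies part~\eqref{Tor(F-N-dual)-Ext(F-N)-dual} of Theorem~\ref{Tor-Ext-relations} directly to $M$, obtaining $\Tor^R_1(M, N^+) \cong \Ext^1_R(M,N)^+$ and then killing the left-hand side because $N^+ \in \FPflat{n}$; the finiteness hypothesis on $M$ is consumed inside that Ext--Tor isomorphism. You instead use part~\eqref{Ext(M-N-dual)-Tor(M-N)-dual}, which carries no finiteness hypothesis on $M$ but lands you at $\Ext^1_R(M, L^{++})=0$ rather than at $\Ext^1_R(M,L)=0$; you then recover the vanishing for $L$ via the pure embedding $L \hookrightarrow L^{++}$ together with the fact that $\Hom_R(M,-)$ preserves pure exact sequences when $M$ is finitely presented. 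Thus the finite presentation of $M$ (guaranteed by $n>1$) is used in both proofs, just at different points: inside the duality isomorphism in the paper, and in the purity descent in yours. Your route is a couple of lines longer, but it has the modest virtue of invoking only the unconditional half of the Ext--Tor relations and of making the appeal to finite presentation completely explicit.
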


\begin{proof} 
The direction \eqref{M-is-in-FPn-Tor1}  implies \eqref{M-is-in-FP(n-1)-and-Tor1-is-0} is clear. 

Next, assume that $M$ satisfy \eqref{M-is-in-FP(n-1)-and-Tor1-is-0} and let $n > 1$. We will let $N \in \FPinj{n}$ and show that $\Ext^1_R(M,N) = 0$, since by Lemma~\ref{characterization-of-FPn-by-Ext1} we will get \eqref{M-is-in-FPn-Tor1}. For $n > 1$ we have $N^+ \in \FPflat{n}$, and so $\Tor^R_1(M , N^+) = 0$. On the other hand, by Theorem~\ref{Tor-Ext-relations} we have  that 
\[
\Tor^R_1(M, N^+) \cong \Ext^1_R(M, N)^+. 
\]
It follows $\Ext^1_R(M,N)^+ = 0$, and thus $\Ext^1_R(M,N) = 0$ for every $N \in \FPinj{n}$.

\end{proof}

\begin{remark}
Lemma \ref{characterization-of-FPn-by-Ext1} is stated for $n \geq 1$. This is not the situation for Lemma~\ref{characterization-of-FPn-by-Tor1}, since the duality relations do not allow us the case $n=1$.
\end{remark}

We are now ready to prove the main results of this section.

\begin{theorem}[Characterization of $n$-coherent rings via ${\text{FP}}_n$-injective modules] \label{characterization-of-n-coh-via-FPninj}
The following conditions are equivalent in $\Rmod$ for every $n \geq 1$.
\begin{enumerate}
\item $R$ is $n$-coherent. \label{R-is-n-coherent-with-her-cot-pair}

\item $\FPinj{n} = \FPinj{\infty}$. \label{FPninj=FPinftyinj}

\item $\FPinj{n} = \FPinj{n+1}$. \label{FPninj=FPn+inj}

\item $\FPinj{n}$ is coresolving. \label{FPninj-corresolving}

\item $(^\perp(\FPinj{n}), \FPinj{n})$ is a hereditary cotorsion pair. \label{FPninj-hereditary-cotorsion-pair}

\item $\Ext^k_R(M,N) = 0$ for every $k > 1$, for every $M \in \FP{n}$ and every $N \in \FPinj{n}$.  \label{higher-Ext-for-FPninj}
\end{enumerate}
\end{theorem}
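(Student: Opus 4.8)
The plan is to establish the cycle $\eqref{R-is-n-coherent-with-her-cot-pair} \Rightarrow \eqref{FPninj=FPinftyinj} \Rightarrow \eqref{FPninj=FPn+inj} \Rightarrow \eqref{FPninj-corresolving} \Rightarrow \eqref{higher-Ext-for-FPninj} \Rightarrow \eqref{FPninj-hereditary-cotorsion-pair} \Rightarrow \eqref{R-is-n-coherent-with-her-cot-pair}$, routing the whole argument through Theorem~\ref{characterization-of-n-coherent} (which says $R$ is $n$-coherent iff $\FP{n} = \FP{\infty}$) and Lemma~\ref{characterization-of-FPn-by-Ext1}. For $\eqref{R-is-n-coherent-with-her-cot-pair} \Rightarrow \eqref{FPninj=FPinftyinj}$: if $R$ is $n$-coherent then $\FP{n} = \FP{\infty}$, so the defining orthogonality conditions $\Ext^1_R(F,M)=0$ for all $F\in\FP{n}$ and for all $F\in\FP{\infty}$ coincide, giving $\FPinj{n}=\FPinj{\infty}$. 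The implication $\eqref{FPninj=FPinftyinj}\Rightarrow\eqref{FPninj=FPn+inj}$ is immediate from the chain \eqref{chain-of-FPn-inj}, since $\FPinj{n}\subseteq\FPinj{n+1}\subseteq\FPinj{\infty}=\FPinj{n}$ forces equality throughout.

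For $\eqref{FPninj=FPn+inj}\Rightarrow\eqref{FPninj-corresolving}$: we already know $\FPinj{n}$ contains $\Inj$ and is closed under extensions and cokernels of monomorphisms would follow if we can show it is closed under cokernels of monomorphisms. Given $0\to A\to B\to C\to 0$ with $A,B\in\FPinj{n}$, for any $F\in\FP{n+1}$ the long exact sequence in $\Ext_R(F,-)$ together with $\Ext^1_R(F,B)=0$ (as $\FP{n+1}\subseteq\FP{n}$) and $\Ext^2_R(F,A)\cong\Ext^1_R(\Omega^1 F,A)=0$ (since $\Omega^1 F\in\FP{n}$ and $A\in\FPinj{n}$) gives $\Ext^1_R(F,C)=0$, so $C\in\FPinj{n+1}=\FPinj{n}$. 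For $\eqref{FPninj-corresolving}\Rightarrow\eqref{higher-Ext-for-FPninj}$: if $\FPinj{n}$ is coresolving then dimension shifting works — for $N\in\FPinj{n}$, repeatedly using $0\to N\to I\to\Omega^{-1}N\to 0$ with $I$ injective keeps $\Omega^{-1}N\in\FPinj{n}$, so $\Ext^k_R(M,N)\cong\Ext^1_R(M,\Omega^{-(k-1)}N)=0$ for all $M\in\FP{n}$ and $k>1$, by the defining property of $\FPinj{n}$. The implication $\eqref{higher-Ext-for-FPninj}\Rightarrow\eqref{FPninj-hereditary-cotorsion-pair}$: we have the complete cotorsion pair $(^\perp(\FP{n}),\FPinj{n})=(^\perp(\FPinj{n})^{\perp\perp}\text{-side})$ from Corollary~\ref{cotorsion-pairs-FPninj-right} (here $(^\perp(\FPinj{n}))^\perp=\FPinj{n}$ since $\FPinj{n}=\FP{n}^\perp$ and one checks $^\perp(\FP{n}^\perp)$ has $\FP{n}$-elements Ext-orthogonal to $\FPinj{n}$); the vanishing of all higher $\Ext$ between $\FP{n}$ and $\FPinj{n}$ propagates to higher $\Ext$ between $^\perp(\FPinj{n})$ and $\FPinj{n}$ by the standard argument that $A\in{}^\perp(\FPinj{n})$ admits, via completeness, a resolution by modules from $\FP{n}$-orthogonal data, making the pair hereditary.

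For the closing implication $\eqref{FPninj-hereditary-cotorsion-pair}\Rightarrow\eqref{R-is-n-coherent-with-her-cot-pair}$: if $(^\perp(\FPinj{n}),\FPinj{n})$ is hereditary, then $^\perp(\FPinj{n})$ is resolving, in particular closed under kernels of epimorphisms. Take $M\in\FP{n}$ with finite $n$-presentation $F_n\to\cdots\to F_0\xrightarrow{f_0}M\to 0$; then $M\in{}^\perp(\FPinj{n})$ (by Lemma~\ref{characterization-of-FPn-by-Ext1}, or directly) and each $F_i\in{}^\perp(\FPinj{n})$, so $K=\kker{f_0}\in{}^\perp(\FPinj{n})$, and since $K\in\FP{n-1}$ already, Lemma~\ref{characterization-of-FPn-by-Ext1} upgrades $K$ to $\FP{n}$, whence $M\in\FP{n+1}$; thus $\FP{n}\subseteq\FP{n+1}$ and $R$ is $n$-coherent. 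The main obstacle I expect is the precise bookkeeping in $\eqref{higher-Ext-for-FPninj}\Rightarrow\eqref{FPninj-hereditary-cotorsion-pair}$ — transferring the higher-$\Ext$ vanishing from the generating class $\FP{n}$ to the full left class $^\perp(\FPinj{n})$ — which requires invoking completeness of the pair from Corollary~\ref{cotorsion-pairs-FPninj-right} together with dimension-shifting, and checking that $(^\perp(\FPinj{n}))^\perp = \FPinj{n}$ so that the pair in \eqref{FPninj-hereditary-cotorsion-pair} really is the one from that corollary.
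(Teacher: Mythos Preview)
Your cycle is largely sound, and your $(5)\Rightarrow(1)$ via resolving left class plus Lemma~\ref{characterization-of-FPn-by-Ext1} is a clean alternative to the paper's route through $(6)\Rightarrow(1)$. The implications $(1)\Rightarrow(2)\Rightarrow(3)\Rightarrow(4)$ and $(4)\Rightarrow(6)$ are essentially the same as the paper's (the paper does $(4)\Rightarrow(5)\Rightarrow(6)$ instead of $(4)\Rightarrow(6)$, but your direct dimension-shift is fine).

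The genuine gap is $(6)\Rightarrow(5)$. You recognize this yourself as the ``main obstacle,'' and your sketch --- propagating higher $\Ext$-vanishing from $\FP{n}$ to all of ${}^\perp(\FPinj{n})$ via completeness and some unspecified resolution --- is not a proof. There is no standard mechanism by which $\Ext^{>1}$-orthogonality against a \emph{generating} class transfers to the whole left half of the cotorsion pair without additional structure; completeness gives you approximations, not resolutions by objects of $\FP{n}$. The paper sidesteps this entirely by reversing the order: $(4)\Rightarrow(5)$ is a one-line citation of \cite[Lemma 2.2.10]{Gobel} (coresolving right class forces the pair hereditary), and $(5)\Rightarrow(6)$ is immediate since $\FP{n}\subseteq{}^\perp(\FPinj{n})$.

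If you want to keep your ordering, the easy fix is to prove $(6)\Rightarrow(4)$ directly: given $0\to A\to B\to C\to 0$ with $A,B\in\FPinj{n}$ and $F\in\FP{n}$, the long exact sequence has $\Ext^1_R(F,B)=0$ by definition and $\Ext^2_R(F,A)=0$ by hypothesis $(6)$, so $\Ext^1_R(F,C)=0$ and $C\in\FPinj{n}$. Then $(4)\Rightarrow(5)$ by the standard lemma. This makes $(4)\Leftrightarrow(6)$ an easy two-way bridge, and the equivalences close without the hand-waving.
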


\begin{proof} 
\

\eqref{R-is-n-coherent-with-her-cot-pair} $\Rightarrow$ \eqref{FPninj=FPinftyinj}: Suppose $R$ is $n$-coherent. Then $\FP{n} = \FP{\infty}$ by Theorem~\ref{characterization-of-n-coherent}, then $\FPinj{n} = \FP{n}^\perp = \FP{\infty}^\perp = \FPinj{\infty}$. 

\eqref{FPninj=FPinftyinj} $\Rightarrow$ \eqref{FPninj=FPn+inj}: If $\FPinj{n} = \FPinj{\infty}$, then the chain from \eqref{chain-of-FPn-inj} collapses at $n$, and so $\FPinj{n} = \FPinj{n+1}$.

\eqref{FPninj=FPn+inj} $\Rightarrow$ \eqref{FPninj-corresolving}: Under the assumption that $\FPinj{n} = \FPinj{n+1}$, we will show that $\FPinj{n+1}$ is coresolving, thus making $\FPinj{n}$ coresolving. Suppose we are given a short exact sequence $\ses{A}{B}{C}$ with $A, B \in \FPinj{n+1}$, and let $F \in \FP{n+1}$. Then we obtain the following induced exact sequence:
\[
\Ext^1_R(F,B) \to \Ext^1_R(F,C) \to \Ext^2_R(F,A).
\] 
Since $F \in \FP{n+1}$ and $B \in \FPinj{n+1}$, then $\Ext^1_R(F,B) = 0$. Also note that $\Ext^2_R(F,A) \cong \Ext^1_R(\Omega(F),A)$, where $\Omega(F)$ is the first syzygy of $F$, and as such $\Omega(F) \in \FP{n}$. But $A \in \FPinj{n+1} = \FPinj{n}$, so $\Ext^2_R(F,A)=\Ext^1_R(\Omega(F),A)=0$. This makes $\Ext^1_R(F,C)=0$ for any $F \in \FP{n+1}$, i.e. $C \in \FPinj{n+1}$, and therefore $\FPinj{n} = \FPinj{n+1}$ is coresolving. On the other hand, we already know that $\FPinj{n}$ is closed under extensions and contains that class of injective modules, giving us the result.

\eqref{FPninj-corresolving} $\Rightarrow$ \eqref{FPninj-hereditary-cotorsion-pair}: We know that $(^{\perp}(\FPinj{n}),\FPinj{n})$ is a cotorsion pair by Corollary \ref{cotorsion-pairs-FPninj-right}, so if $\FPinj{n}$ is coresolving, then  \cite[Lemma 2.2.10]{Gobel} says that the cotorsion pair $(^{\perp}(\FPinj{n}),\FPinj{n})$ is hereditary. 

\eqref{FPninj-hereditary-cotorsion-pair} $\Rightarrow$ \eqref{higher-Ext-for-FPninj}: We have that $\Ext^k_R(M,N) = 0$ for every $k > 1$, for every $M \in {}^\perp\FPinj{n}$ and every $N \in \FPinj{n}$, by definition. Hence the result follows since $\FP{n} \subseteq {}^\perp(\FPinj{n})$.  

\eqref{higher-Ext-for-FPninj} $\Rightarrow$ \eqref{R-is-n-coherent-with-her-cot-pair}: Let $M \in \FP{n}$, then there is a short exact sequence 
\[
\ses{K}{F}{M},
\]
with $K \in \FP{n-1}$ and $F$ finitely generated and free. Let $N \in \FPinj{n}$, and obtain the following induced exact sequence:
\[
\Ext^1_R(F,N) \to \Ext^1_R(K,N) \to \Ext^2_R(M,N).
\]
Note that $\Ext^1_R(F,N)=0$ since $F$ is projective, and that $\Ext^2_R(M,N)=0$ by hypothesis. Thus $\Ext^1_R(K,N)=0$ for all $N \in \FPinj{n}$. So, by Lemma~\ref{characterization-of-FPn-by-Ext1}, we have that $K \in \FP{n}$. This means that $M \in \FP{n+1}$, giving us the result.
\end{proof}

We also have a version of the previous theorem via $\text{FP}_n$-flat modules.

\begin{theorem}[Characterization of $n$-coherent rings via ${\rm FP}_n$-flat modules] \label{characterization-of-n-coh-via-FPnflat}
The following conditions are equivalent in $\modR$ for every $n > 1$.
\begin{enumerate}
\item $R$ is $n$-coherent. \label{R-is-n-coherent-with-her-cot-pair-flat}

\item $\FPflat{n} = \FPflat{\infty}$. \label{FPnflat=FPinftyflat}

\item $\FPflat{n} = \FPflat{n+1}$. \label{FPnflat=FPn+flat}

\item $\FPflat{n}$ is resolving. \label{FPnflat-resolving}

\item $(\FPflat{n}, (\FPflat{n})^\perp)$ is a hereditary cotorsion pair. \label{FPnflat-hereditary-cotorsion-pair}

\item $\Tor^R_1(N,M) = 0$ for every $N \in \FP{n}$ and $M \in \FPflat{n}$.  \label{higher-Tor-for-FPnflat}
\end{enumerate}
\end{theorem}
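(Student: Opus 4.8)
The plan is to establish the cycle of implications in parallel with the proof of Theorem~\ref{characterization-of-n-coh-via-FPninj}, exploiting the duality between $\FPinj{n}$ and $\FPflat{n}$ provided by Propositions~\ref{FPnflat-iff-FPninj^+} and~\ref{FPninj-iff-FPflat^+} and the Ext--Tor relations of Theorem~\ref{Tor-Ext-relations}. The implications \eqref{R-is-n-coherent-with-her-cot-pair-flat} $\Rightarrow$ \eqref{FPnflat=FPinftyflat} $\Rightarrow$ \eqref{FPnflat=FPn+flat} are the easy ones: if $R$ is $n$-coherent then $\FP{n} = \FP{\infty}$ by Theorem~\ref{characterization-of-n-coherent}, and since $\FPflat{n}$ depends only on the orthogonality class $\FP{n}$ with respect to $\Tor^R_1$, we get $\FPflat{n} = \FPflat{\infty}$; the second implication is just the observation that the ascending chain \eqref{chain-of-FPn-flat} collapses at $n$ once its $n$-th and $\infty$-th terms agree.

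Next I would prove \eqref{FPnflat=FPn+flat} $\Rightarrow$ \eqref{FPnflat-resolving}, mirroring the dual argument: assuming $\FPflat{n} = \FPflat{n+1}$, take a short exact sequence $\ses{A}{B}{C}$ with $B, C \in \FPflat{n+1}$ and $F \in \FP{n+1}$, and feed it into the long exact sequence for $\Tor^R_\bullet(F, -)$:
\[
\Tor^R_2(F,C) \to \Tor^R_1(F,A) \to \Tor^R_1(F,B).
\]
Here $\Tor^R_1(F,B) = 0$ because $F \in \FP{n+1}$ and $B \in \FPflat{n+1}$, while $\Tor^R_2(F,C) \cong \Tor^R_1(\Omega(F), C)$ with $\Omega(F) \in \FP{n}$, and $C \in \FPflat{n+1} = \FPflat{n}$ forces this to vanish; hence $A \in \FPflat{n+1} = \FPflat{n}$. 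Combined with Proposition~\ref{properties-FPnflat} (closure under extensions) and the fact that $\Proj \subseteq \Flat \subseteq \FPflat{n}$, this gives that $\FPflat{n}$ is resolving. Then \eqref{FPnflat-resolving} $\Rightarrow$ \eqref{FPnflat-hereditary-cotorsion-pair} is immediate from Theorem~\ref{cotorsion-pair-FPnflat-left} together with \cite[Lemma 2.2.10]{Gobel} (a cotorsion pair with resolving left half is hereditary), and \eqref{FPnflat-hereditary-cotorsion-pair} $\Rightarrow$ \eqref{higher-Tor-for-FPnflat} reduces to the remark that a hereditary cotorsion pair $(\FPflat{n}, (\FPflat{n})^\perp)$ also satisfies $\Tor^R_k(N, M) = 0$ for all $k \geq 1$ when $N \in \FP{n} \subseteq {}^\perp((\FPflat{n})^\perp)$ — one should spell out here that resolving-ness of $\FPflat{n}$ yields the dimension-shifting needed to push vanishing of $\Tor_1$ up to all $\Tor_k$.

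Finally, for \eqref{higher-Tor-for-FPnflat} $\Rightarrow$ \eqref{R-is-n-coherent-with-her-cot-pair-flat} I would take $M \in \FP{n}$, write a short exact sequence $\ses{K}{F}{M}$ with $F$ finitely generated free and $K \in \FP{n-1}$, and show $K \in \FP{n}$ via Lemma~\ref{characterization-of-FPn-by-Tor1}: given $N \in \FPflat{n}$, the long exact $\Tor$-sequence gives $\Tor^R_1(F, N) \to \Tor^R_1(K,N) \to \Tor^R_2(M, N)$ wait — I must be careful with variance, so instead I would apply $\Tor^R_\bullet(-, N)$ in the form $\Tor^R_2(M,N) \to \Tor^R_1(K,N) \to \Tor^R_1(F,N)$ with $\Tor^R_1(F,N) = 0$ ($F$ free) and $\Tor^R_2(M,N) = 0$ by hypothesis (rewriting $\Tor^R_2(M,N) \cong \Tor^R_1(K, N)$ is circular, so the hypothesis on $\Tor_2$ via the $k>1$ clause is exactly what is needed). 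Hence $\Tor^R_1(K, N) = 0$ for all $N \in \FPflat{n}$, and since $K \in \FP{n-1}$, Lemma~\ref{characterization-of-FPn-by-Tor1} gives $K \in \FP{n}$, so $M$ has a finite $(n+1)$-presentation and $R$ is $n$-coherent. The main obstacle I anticipate is the bookkeeping around right- versus left-module sides and the direction in which $\Tor$ vanishing propagates: the statement is phrased in $\modR$, so $\FP{n}$ here means finitely $n$-presented right modules while $\FPflat{n}$ are left modules, and the character-module duality must be invoked with the correct handedness — but all the needed ingredients (Theorem~\ref{Tor-Ext-relations}, Lemma~\ref{characterization-of-FPn-by-Tor1}, Proposition~\ref{properties-FPnflat}) are already available, so this is a matter of care rather than a genuine new difficulty.
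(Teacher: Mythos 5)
Your proposal is correct and follows essentially the same route as the paper: the implications $(1)\Rightarrow(2)\Rightarrow(3)\Rightarrow(4)\Rightarrow(5)$ and $(6)\Rightarrow(1)$ are the Tor-duals of the $\FPinj{n}$ argument, and for $(5)\Rightarrow(6)$ the paper gives exactly the dimension-shifting argument you sketch, taking $0\to K\to P\to M\to 0$ with $P$ projective, noting that hereditariness forces $K\in\FPflat{n}$, and shifting $\Tor^R_{k+1}(N,M)\cong\Tor^R_k(N,K)$. You are also right to flag that item $(6)$ as printed (only $\Tor^R_1$) is vacuous by the definition of $\FPflat{n}$ and must be read, as the label suggests and in parallel with the $\Ext^k$, $k>1$ clause of the injective theorem, as $\Tor^R_k(N,M)=0$ for all $k>1$; the one small wrinkle in your write-up is the passing appeal to $\FP{n}\subseteq{}^{\perp}((\FPflat{n})^{\perp})$, which is an $\Ext$-orthogonality statement and not the mechanism at work here, but you immediately give the correct justification via the resolving property of $\FPflat{n}$.
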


\begin{proof} The implications \eqref{R-is-n-coherent-with-her-cot-pair-flat} $\Rightarrow$ \eqref{FPnflat=FPinftyflat}, \eqref{FPnflat=FPinftyflat} $\Rightarrow$ \eqref{FPnflat=FPn+flat}, \eqref{FPnflat=FPn+flat} $\Rightarrow$ \eqref{FPnflat-resolving}, \eqref{FPnflat-resolving} $\Rightarrow$ \eqref{FPnflat-hereditary-cotorsion-pair}, and \eqref{higher-Tor-for-FPnflat} $\Rightarrow$ \eqref{R-is-n-coherent-with-her-cot-pair-flat} are dual to the corresponding implications in Theorem~\ref{characterization-of-n-coh-via-FPninj}. 

We only prove \eqref{FPnflat-hereditary-cotorsion-pair} $\Rightarrow$ \eqref{higher-Tor-for-FPnflat}. Let $N \in \FP{n}$ and $M \in \FPflat{n}$. Consider a short exact sequence
\[
0 \to K \to P \to M \to 0
\]  
where $P$ is projective. Then we the following induced exact sequence 
\[
\Tor^R_{k+1}(N,P) \to \Tor^R_{k+1}(N,M) \to \Tor^R_k(N,K) \to \Tor^R_k(N,P),
\]
where its ends are $0$ since $P$ is projective, and so it follows $\Tor^R_{k+1}(N,M) \cong \Tor^R_k(N,K)$. In particular, $\Tor^R_2(N,M) \cong \Tor^R_1(N,K)$. On the other hand, we are assuming that $(\FPflat{n}, (\FPflat{n})^\perp)$ is hereditary, and thus $K \in \FPflat{n}$. Then we have $\Tor^R_1(N,K) = 0$, and so $\Tor^R_2(N,M) = 0$. The rest follows by induction on $k > 1$. 
\end{proof}

Using these two results, we illustrate the chains in \eqref{chain-of-FPn-inj} with the ring in Example \ref{2-coherent-ring-example}.

\begin{example} \label{example-chains-stop-2} 
Consider the ring $R := k[x_1, x_2, x_3, \dots] / (x_i x_j)_{i,j \geq 1}$, with $k$ any field. Then we have the following chain of finitely generated modules:
\[
\FP{0}  \supset \FP{1} \supset \FP{2} = \FP{n} = \FP{\infty}.
\]
Recall that this ring is $2$-coherent and that $\FPinj{\infty} = \Rmod = \FPflat{\infty}$, i.e. every module is FP$_{\infty}$-injective and FP$_{\infty}$-flat. Furthermore, Theorem~\ref{characterization-of-n-coh-via-FPninj} and Theorem~\ref{characterization-of-n-coh-via-FPnflat}, tell us that the the following chains also collapse at $n = 2$,
\[
\FPinj{0}  \subset  \FPinj{1}  \subset  \FPinj{2} = \FPinj{\infty}, 
\]
\[
\FPflat{0}  \subset  \FPflat{1}  \subset \FPinj{2} = \FPflat{\infty}. 
\]
where the inclusions are strict. 

As a complement, we also exhibit a module that shows that that the inclusions $\FPinj{1} \subset \FPinj{\infty}$ and $\FPflat{1} \subset \FPflat{\infty}$ are indeed strict. We begin 
by looking %
for an FP$_\infty$-injective module which is not FP-injective. Consider the ideal $(x_1)$ as an  $R$-module over $R$. Then we have a short exact sequence 
\[
0 \to (x_1) \to R \to R / (x_1) \to 0,
\]
which is not split (if it is, then $x_1$ will act non trivially on the sum). So 
\[
\displaystyle \Ext^1_R(R/(x_1), (x_1)) \neq 0,
\]
and hence $(x_1)$ is not absolutely pure, since $R/(x_1)$ is finitely presented (as seen in Example \ref{2-coherent-ring-example}). Thus, $(x_1) \in \FPinj{\infty} \setminus \FPinj{1}$. On the other hand, the right $R$-module $(x_1)^+ \in \FPflat{\infty} \setminus \FPflat{1}$ exhibits that $\FPflat{1} \subset \FPflat{\infty}$ is indeed strict. 
\end{example}

%

\bibliographystyle{alpha}
\bibliography{bibliography-finiteness-cond}

\begin{thebibliography}{{Mad}67}

\bibitem[BGH14]{BGH}
D.~Bravo, J.~Gillespie, and M.~Hovey.
\newblock The stable module category of a general ring.
\newblock {\em preprint}, arXiv:1405.5768v1, 2014.

\bibitem[Bie81]{bieri}
R.~Bieri.
\newblock {\em Homological dimension of discrete groups}.
\newblock Queen Mary College mathematics notes. Mathematics Dept., Queen Mary
  College, University of London, 1981.

\bibitem[Bor94]{Borceaux}
F.~Borceaux.
\newblock {\em Handbook of Categorical Algebra 1. Basic Category Theory},
  volume~50 of {\em Encyclopedia of Mathematics and its Applications}.
\newblock Cambridge University Press, 1994.

\bibitem[Bou72]{Bourbaki-Commutative-Algebra}
N.~Bourbaki.
\newblock {\em Elements of Mathematics. Commutative Algebra}.
\newblock Addison-Wesley Publishing Company, 1972.

\bibitem[Bro75]{Brown-Homological}
K.~S. Brown.
\newblock Homological criteria for finiteness.
\newblock {\em Commentarii Mathematici Helvetici}, 50(1):129--135, 1975.

\bibitem[Bro82]{Brown-Cohomology-of-groups}
K.~S. Brown.
\newblock {\em Cohomology of Groups}.
\newblock Graduate Texts in Mathematics. Springer, 1982.

\bibitem[CD96]{Ding-Chen-n-coherent}
J.~Chen and D.~Ding.
\newblock On $n$-coherent rings.
\newblock {\em Communications in Algebra}, 24(10):3211--3216, 1996.

\bibitem[Cos94]{Costa}
D.L. Costa.
\newblock Parameterizing families of non-noetherian rings.
\newblock {\em Communications in Algebra}, 22(10):3997--4011, 1994.

\bibitem[DKM97]{dobbs}
D.~E. Dobbs, S.-E. Kabbaj, and N.~Mahdou.
\newblock n-coherent rings and modules.
\newblock {\em Dekker Lect. Notes Pure Appl. Math.}, 185:269--282, 1997.

\bibitem[EJ11]{EJ}
E.~E. Enochs and O.~M.~G. Jenda.
\newblock {\em Relative Homological Algebra}.
\newblock Enochs, Edgar E.; Jenda, Overtoun M. G.: Relative Homological
  Algebra. De Gruyter, 2011.

\bibitem[ET01]{EklofTrlifaj}
Paul~C. Eklof and Jan Trlifaj.
\newblock {How to make Ext vanish.}
\newblock {\em Bull. Lond. Math. Soc.}, 33(1):41--51, 2001.

\bibitem[Fie72]{Fieldhouse}
David~J. Fieldhouse.
\newblock Character modules, dimension and purity.
\newblock {\em Glasgow Mathematical Journal}, 13(02):144--146, 1972.

\bibitem[Gla89]{glaz}
S.~Glaz.
\newblock {\em Commutative Coherent Rings}.
\newblock Lecture Notes in Mathematics. Springer Berlin Heidelberg, 1989.

\bibitem[GT06]{Gobel}
R~G\"obel and J.~Trlifaj.
\newblock {\em Approximations and Endomorphism Algebras of Modules}.
\newblock De Gruyter Expositions in Mathematics. W. De Gruyter, 2006.

\bibitem[Hr08]{Holm}
Henrik Holm and Peter~J\o rgensen.
\newblock Covers, precovers, and purity.
\newblock {\em Illinois Journal of Mathematics}, 52(2):691--703, 2008.

\bibitem[{Mad}67]{maddox}
B.H. {Maddox}.
\newblock {Absolutely pure modules.}
\newblock {\em {Proc. Am. Math. Soc.}}, 18:155--158, 1967.

\bibitem[MD05]{MaoDing2005}
L.~Mao and N.~Ding.
\newblock Relative fp-projective modules.
\newblock {\em Communications in Algebra}, 2005.

\bibitem[MD07a]{MaoDing2007Com}
L.~Mao and N.~Ding.
\newblock Envelopes and covers by modules of finite fp-injective and flat
  dimensions.
\newblock {\em Communications in Algebra}, 2007.

\bibitem[MD07b]{MaoDing2007}
L.~Mao and N.~Ding.
\newblock On a new generalization of coherent rings.
\newblock {\em Publ. Math. Debrecen}, 2007.

\bibitem[Ste70]{Stenstrom}
B.~Stenstr{\"o}m.
\newblock Coherent rings and fp-injective modules.
\newblock {\em J.London Math.Soc.}, 2(2):323--329, 1970.

\end{thebibliography}

\end{document}